\newcommand{\dnd}{\nmid}
\newcommand{\TITLE}{A family of monogenic $S_4$ quartic fields arising from elliptic curves}
\newcommand{\TITLERUNNING}{Monogenic fields from elliptic curves}
\theoremstyle{plain}
\newtheorem{theorem}{Theorem}
\newtheorem{proposition}[theorem]{Proposition}
\newtheorem{lemma}[theorem]{Lemma}
\theoremstyle{definition}
\theoremstyle{remark}
\numberwithin{theorem}{section}
\newcommand{\tightoverset}[2]{%
  \mathop{#2}\limits^{\vbox to -.5ex{\kern-1.05ex\hbox{$#1$}\vss}}}
\newcommand{\NN}{\mathbb{N}}
\newcommand{\QQ}{\mathbb{Q}}
\newcommand{\ZZ}{\mathbb{Z}}
\newcommand{\Disc}{\operatorname{Disc}}
\renewcommand{\gcd}{{\operatorname{gcd}}}
\newcommand{\MOD}[1]{~(\textup{mod}~#1)}
\renewcommand{\pmod}{\MOD}
\newcommand{\res}{\operatornamewithlimits{res}}
\renewcommand{\setminus}{\smallsetminus}
\newcommand{\red}{\operatorname{red}}
\newcommand{\bbf}{\mathbb{F}}
\newcommand{\ind}{\operatorname{ind}}
\title[\TITLERUNNING]{\vspace*{-1.3cm} \TITLE}
\date{\today}
\author[T. Alden Gassert]{T. Alden Gassert}
\address{%
Department of Mathematics and Computer Science, Hobart and William Smith Colleges, 300 Pulteney St, Geneva, New York 14456}
\email{gassert@hws.edu}
\author[Hanson Smith]{Hanson Smith}
\address{%
Department of Mathematics, University of Colorado,
Campus Box 395, Boulder, Colorado 80309-0395}
\email{hanson.smith@colorado.edu}
\author[Katherine E. Stange]{Katherine E. Stange}
\address{%
Department of Mathematics, University of Colorado,
Campus Box 395, Boulder, Colorado 80309-0395}
\email{kstange@math.colorado.edu}
\keywords{elliptic curves, ring of integers, division polynomials, monogenicity, monogeneity, torsion fields, division fields}
\subjclass[2010]{11G05, 11R04, 11R16}
\thanks{%
The second and third author have been supported by NSF EAGER DMS-1643552.  The third author has also been supported by NSF CAREER CNS-1652238, and NSA Young Investigator Grants H98230-16-1-0040 and H98230-14-1-0106.  The United States Government is authorized to reproduce and distribute reprints notwithstanding any copyright notation herein.
}
\begin{document}

\maketitle


\begin{abstract}
  We consider partial torsion fields (fields generated by a root of a division polynomial) for elliptic curves.  By analysing the reduction properties of elliptic curves, and applying the Montes Algorithm, we obtain information about the ring of integers.  In particular, for the partial $3$-torsion fields for a certain one-parameter family of non-CM elliptic curves, we describe a power basis.  As a result, we show that the one-parameter family of quartic $S_4$ fields given by $T^4 - 6T^2 - \alpha T - 3$ for $\alpha \in \ZZ$ such that $\alpha \pm 8$ are squarefree, are monogenic. 
\end{abstract}

\section{Introduction}

Consider the following result.

\begin{theorem}
  \label{thm:field-family}
  Suppose that $\alpha \pm 8$ is squarefree, where $\alpha \in \ZZ$.  Then the field $K_\alpha = \QQ(\theta)$ where $\theta$ is a root of the irreducible polynomial $T^4 - 6T^2 - \alpha T - 3$ has ring of integers $\ZZ[\theta]$; in other words, $K_\alpha$ is a quartic monogenic field.
\end{theorem}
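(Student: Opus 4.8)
The plan is to prove $\mathcal{O}_{K_\alpha}=\ZZ[\theta]$ directly, by checking with Dedekind's criterion that no prime divides the index $[\mathcal{O}_{K_\alpha}:\ZZ[\theta]]$; this is a hands-on alternative to the Montes-algorithm computation used in the body of the paper. Write $f(T)=T^4-6T^2-\alpha T-3$. The quartic discriminant formula gives
\[
  \disc(f)\;=\;-27\,(\alpha^2-64)^2\;=\;-3^3(\alpha-8)^2(\alpha+8)^2 .
\]
Since $\disc(f)=[\mathcal{O}_{K_\alpha}:\ZZ[\theta]]^2\,d_{K_\alpha}$, the only primes that can divide the index are $p=3$, the odd primes dividing $(\alpha-8)(\alpha+8)$, and $p=2$ in the case $\alpha$ even (for $\alpha$ odd, $\disc(f)$ is odd). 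One also records that $f$ is irreducible: a rational root would lie in $\{\pm1,\pm3\}$ and would force $\alpha=\pm8$, and matching coefficients in a factorization into two monic integer quadratics again forces $\alpha=\pm8$ --- both excluded, as $0$ is not squarefree. (Computing the resolvent cubic $y^3+6y^2+12y+72-\alpha^2$ and using $\disc(f)<0$ then identifies the Galois group as $S_4$, though that is not needed here.)

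For an odd prime $p\neq3$ with $p\mid\alpha-8$ (the case $p\mid\alpha+8$ being identical via $T\mapsto-T$, $\alpha\mapsto-\alpha$): reducing mod $p$ and using $\alpha\equiv8$ gives $\bar f\equiv(T+1)^3(T-3)$ in $\FF_p[T]$, whose unique repeated factor is $T+1$. With the lifts $g=(T+1)(T-3)$ and $h=(T+1)^2$ one computes $f-gh=(8-\alpha)T$, so Dedekind's test polynomial is $F=\tfrac{f-gh}{p}=\tfrac{8-\alpha}{p}\,T$. As $\alpha-8$ is squarefree, $p\nmid\tfrac{\alpha-8}{p}$, so $\bar F$ is a nonzero scalar multiple of $T$; since $T+1\nmid T$, Dedekind's criterion is met and $p\nmid[\mathcal{O}_{K_\alpha}:\ZZ[\theta]]$.

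The primes $p\in\{2,3\}$ are where the squarefreeness hypothesis is actually used. Modulo $3$ one has $\bar f\equiv T(T^3-\alpha)$: if $3\mid\alpha$ then $\bar f\equiv T^4$ and the test polynomial has nonzero constant term, so $3\nmid$ index; if $3\nmid\alpha$ then $\bar f\equiv T(T-1)^3$ or $T(T+1)^3$ according as $\alpha\equiv1$ or $2\pmod3$, and in these cases $3$ divides exactly one of $\alpha\pm8$ while Dedekind's criterion can fail only if $9$ divides that same quantity --- which squarefreeness forbids. Likewise, when $\alpha$ is even, $\bar f\equiv(T+1)^4\pmod2$ and the test polynomial reduces to a multiple of $T$ that vanishes iff $4\mid\alpha$, i.e.\ iff $4\mid\alpha-8$ --- again forbidden. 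Eliminating every prime gives $[\mathcal{O}_{K_\alpha}:\ZZ[\theta]]=1$.

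I expect the genuine obstacle to be precisely the ``degenerate'' primes $2$ and $3$: there $\bar f$ has a factor of multiplicity $3$ or $4$, so one is on the edge of failing Dedekind's criterion, and the computation must be arranged so that it is exactly the squarefreeness of $\alpha\pm8$ --- ruling out $4\mid\alpha\pm8$ and $9\mid\alpha\pm8$ --- that rescues it. Tracking the test polynomial carefully modulo $4$ and modulo $9$ is the delicate point; the rest is routine polynomial arithmetic.
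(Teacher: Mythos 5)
Your proposal is correct, but it is a genuinely different route from the paper's. The paper deliberately does \emph{not} argue directly from the polynomial: it realizes $T^4-6T^2-\alpha T-3$ as the $3$-division Fueter polynomial of the Tate normal form curve with a rational $4$-torsion point, runs Tate's algorithm to pin down the reduction types, computes $v_p(F_3)$ at the singular point via division-polynomial valuation formulas, and then invokes the Montes algorithm in its simplest case (one-sided Newton polygon of height $1$, Proposition \ref{prop:montes-for-us}). You instead verify $p$-maximality of $\ZZ[\theta]$ prime by prime with Dedekind's criterion, which for this polynomial is exactly equivalent in strength to that simple Montes case; this is precisely the ``classical computation'' the introduction concedes is possible but declines to carry out. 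Your key steps check out: $\disc(f)=-27(\alpha-8)^2(\alpha+8)^2$ correctly limits the candidate primes; for odd $p\neq 3$ with $p\mid\alpha-8$ one indeed has $\bar f=(T+1)^3(T-3)$, $f-gh=(8-\alpha)T$, and squarefreeness kills the obstruction (note that your repeated root $T=-1$ is exactly the Fueter image of the singular point in the paper's Proposition \ref{prop:val-minus}, and $v_p(f(-1))=v_p(\alpha-8)=1$ is the paper's valuation statement for $n=3$); and the two cases you only sketch do work as you assert: for $3\nmid\alpha$ the test polynomial has a root at the lift of the repeated root exactly when $9$ divides the relevant $\alpha\pm 8$ (e.g.\ for $\alpha\equiv 2\pmod 3$ one gets $\bar F(-1)\equiv-\tfrac{\alpha+1}{3}\pmod 3$, and $9\mid\alpha+1\iff 9\mid\alpha-8$), while for $\alpha$ even the mod-$2$ test polynomial is $\overline{\tfrac{\alpha+4}{2}}\,T$, which is nonzero precisely when $4\nmid\alpha$, i.e.\ $4\nmid\alpha\pm 8$. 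What each approach buys: yours is elementary, self-contained, and shorter for this one family; the paper's trades that for a method that explains \emph{why} the repeated factors and their valuations look the way they do (singular reduction, Kodaira types, component groups) and that scales to partial $n$-torsion fields and discriminant formulas beyond this quartic family, which is the stated point of the article.
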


The discriminant of this polynomial, and hence the field $\QQ(\theta)$, is $-27(\alpha-8)^2(\alpha+8)^2$.  We do not doubt that monogenicity can be deduced by classical computations, but the novelty of this paper is our method:  we discover this family of quartic fields as partial torsion fields (fields generated by a root of a division polynomial) of a particular family of elliptic curves, and deduce monogenicity by reference to reduction properties of the elliptic curve.  In particular, we prove the following.

\begin{theorem}
  \label{thm:ec-family}
  Let $E$ be an elliptic curve defined over $\QQ$, such that some twist $E'$ of $E$ has a $4$-torsion point defined over $\QQ$.  Then the following are equivalent:
  \begin{enumerate}
    \item \label{item:red} $E'$ has reduction types $I^*_1$ and $I_1$ only;
    \item \label{item:sqf} $E$ has $j$-invariant with squarefree denominator except a possible factor of 4.
    \item \label{item:j} $E$ has $j$-invariant $j = \frac{(\alpha^2 - 48)^3}{(\alpha - 8)(\alpha + 8)}$, where $\alpha \in \ZZ$, $\alpha \pm 8$ are squarefree.
  \end{enumerate}
  Let $K_n$ be the field defined by adjoining the $x$-coordinate of an $n$-torsion point of $E$.  
  If any of the above hypotheses holds, then
    $K_3$ is monogenic with a generator given by a root of $T^4 - 6T^2 - \alpha T - 3$.  In particular, the field $K_3$ has discriminant $-27(\alpha-8)^2(\alpha+8)^2$.
\end{theorem}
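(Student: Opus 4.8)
The plan is to exploit the fact that the field generated by the $x$-coordinate of an $n$-torsion point is unchanged under quadratic twisting: twisting by $d\in\QQ^{\times}$ multiplies every $x$-coordinate by $d$, so $K_n$ depends only on $j(E)$. Hence I would replace $E$ by the twist $E'$ with a rational point of order $4$, and after a $\QQ$-isomorphism take $E'$ to be the Tate normal form $E_t\colon y^2+xy-ty=x^3-tx^2$ with $(0,0)$ of order $4$, for a unique $t\in\QQ^{\times}$. On this family $\Delta(E_t)=t^4(16t+1)$, $c_4(E_t)=16t^2+16t+1$, $c_6(E_t)=64t^3-120t^2-24t-1$, and $j(E_t)=\dfrac{(16t^2+16t+1)^3}{t^4(16t+1)}$. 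Running Tate's algorithm along the family, the bad primes of $E_t$ and their Kodaira symbols are read off from the numerator and denominator of $t$; one checks that the symbols are restricted to $I_1$ and $I_1^{*}$ exactly when $t=\tfrac1{\alpha-8}$ (the alternative choice $t=-\tfrac1{\alpha+8}$ gives the quadratic twist of this curve, and the three conditions are insensitive to which $4$-torsion twist is used) for some $\alpha\in\ZZ$ with $\alpha\pm8$ squarefree — the point being that a prime dividing $\alpha-8$ to a power $\ge2$ would force a symbol $I_n^{*}$ with $n\ge2$, and likewise for $\alpha+8$ and $I_n$. Substituting $t=\tfrac1{\alpha-8}$ and using $16t^2+16t+1=\tfrac{\alpha^2-48}{(\alpha-8)^2}$ and $16t+1=\tfrac{\alpha+8}{\alpha-8}$ gives $j=\dfrac{(\alpha^2-48)^3}{(\alpha-8)(\alpha+8)}$, in lowest terms away from $2$ because $\alpha^2-48\equiv16\pmod{\alpha\pm8}$; this is exactly the content of \ref{item:red}$\Leftrightarrow$\ref{item:sqf}$\Leftrightarrow$\ref{item:j}, the reverse implications being the routine reversal of the same computations.

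To pin down $K_3$ I would use the integral model $E\colon y^2=x^3-27(\alpha^2-48)x+54\alpha(\alpha^2-72)$, which has $c_4=1296(\alpha^2-48)$, $c_6=-46656\alpha(\alpha^2-72)$ and $\Delta=1296^3(\alpha^2-64)$ — the identity $(\alpha^2-48)^3-\alpha^2(\alpha^2-72)^2=1728(\alpha^2-64)$ makes these consistent — so $j=\tfrac{(\alpha^2-48)^3}{(\alpha-8)(\alpha+8)}$ and $E$ is a twist of $E'$. Its $3$-division polynomial is $3\left(x^4-54(\alpha^2-48)x^2+216\alpha(\alpha^2-72)x-243(\alpha^2-48)^2\right)$, and a short computation identifies the quartic field it generates with $\QQ(\theta)$, where $\theta^4-6\theta^2-\alpha\theta-3=0$; since $K_3$ is a twist invariant, this is $K_3$. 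First I would record that $T^4-6T^2-\alpha T-3$ is irreducible over $\QQ$ whenever $\alpha\ne\pm8$ (it has no rational root, and a factorisation into two monic integer quadratics forces $\alpha=\pm8$), so $\theta$ indeed generates a quartic field; and a direct discriminant computation for this depressed quartic gives $\disc\bigl(T^4-6T^2-\alpha T-3\bigr)=-27(\alpha-8)^2(\alpha+8)^2$. Thus, once $\ZZ[\theta]=\OK$ is established, $\disc K_3=-27(\alpha-8)^2(\alpha+8)^2$ and the theorem follows.

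It remains to prove $[\OK:\ZZ[\theta]]=1$. Since the square of this index divides $\disc(\ZZ[\theta])=-3^3(\alpha-8)^2(\alpha+8)^2$, only $p=3$ and the prime divisors of $(\alpha-8)(\alpha+8)$ can divide it, and for each such $p$ I would show $\ZZ[\theta]$ is $p$-maximal by feeding $T^4-6T^2-\alpha T-3$ over $\ZZ_p$ into the Montes (Ore--Newton-polygon) machinery. The reductions are remarkably uniform: $T^4-6T^2-\alpha T-3\equiv(T+1)^3(T-3)\pmod p$ if $p\mid\alpha-8$; $\equiv(T-1)^3(T+3)\pmod p$ if $p\mid\alpha+8$; $\equiv T^4\pmod3$ if $3\mid\alpha$; $\equiv T(T+1)^3\pmod3$ if $3\mid\alpha-8$ (and $T(T-1)^3$ if $3\mid\alpha+8$); and $\equiv(T+1)^4\pmod2$ when $\alpha$ is even (so $\alpha\equiv2\pmod4$). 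In each case the $\phi$-Newton polygon attached to the repeated factor $\phi$ turns out to be a single segment with separable residual polynomial — for instance for $p\mid\alpha-8$, $p\ne2,3$, the $(T+1)$-adic expansion is $f=(\alpha-8)-(\alpha-8)(T+1)-4(T+1)^3+(T+1)^4$, with Newton polygon the segment from $(0,1)$ to $(3,0)$ of slope $-\tfrac13$ precisely because $\alpha-8$ is squarefree — so by Ore's theorem $\ZZ[\theta]$ is $p$-maximal and $p$ factors (tamely) in $\OK$ with the expected ramification. Summing the local different exponents gives $v_p(\disc K_3)=v_p(\disc\ZZ[\theta])$ for every $p$, whence $\disc K_3=\disc\ZZ[\theta]$ and $\OK=\ZZ[\theta]$. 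Throughout, the Kodaira symbol of $E'$ at $p$ — good reduction, $I_1$, or $I_1^{*}$ — predicts the factorisation of $p$ in the partial torsion field in advance, which is exactly what tells us the Newton-polygon analysis is one-sided and regular.

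The genuinely delicate part is the local analysis at $p=2$ and $p=3$. At $p=3$, where the reduction is $T^4$, $T(T+1)^3$, or $T(T-1)^3$ according as $3\mid\alpha$ or $3\mid\alpha\mp8$, the ramification can be wild, the minimal model of $E_t$ at $3$ must be followed carefully through Tate's algorithm, and one must use that squarefreeness of $\alpha\mp8$ excludes the exceptional residue $\alpha\equiv\mp8\pmod9$ for which Dedekind's criterion would fail (equivalently, for which the $3$-adic Newton polygon would split into two segments). The prime $p=2$ (for $\alpha\equiv2\pmod4$) is similar: here $E'$ has genuinely additive reduction, the Newton polygon of $(T+1)^4$ has slope $-\tfrac14$ with wild ramification, and one must check that the even power of $2$ in $\disc(\ZZ[\theta])$ is entirely accounted for by the ramification of $2$ in $K_3$ rather than by a factor of $2$ in the index. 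Controlling these two small primes is where the squarefreeness hypothesis on $\alpha\pm8$ — equivalently, the restriction of the Kodaira symbols of $E'$ to $I_1$ and $I_1^{*}$ — is indispensable.
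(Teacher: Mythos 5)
Your overall strategy is sound, and the concrete computations you do display check out: the mod-$p$ factorizations of $T^4-6T^2-\alpha T-3$, the $(T+1)$-development $(\alpha-8)-(\alpha-8)(T+1)-4(T+1)^3+(T+1)^4$, the polynomial discriminant $-27(\alpha-8)^2(\alpha+8)^2$, and irreducibility for $\alpha\neq\pm 8$. For the monogenicity half your route is genuinely different from the paper's: you run the Newton-polygon (Montes/Ore) analysis directly on the quartic, identifying the repeated factors and their lifts by hand, and you conclude exactly as in Proposition \ref{prop:montes-for-us} since each development has constant term of valuation $1$. The paper deliberately avoids this direct computation: it obtains the key valuations $v_p(F_3(T_0))=1$ for $p\ge 5$ from the elliptic curve, by identifying the repeated root of $F_3$ modulo $p$ as the Fueter coordinate of the point with singular reduction and then computing $v_p$ of the division polynomial there via the reduction data of Proposition \ref{prop:tate-alg} and the valuation formulas for $\Psi_n$ at points of multiplicative reduction (Propositions \ref{prop:val-minus}--\ref{prop:val-others}, resting on \cite{s16}). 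What the paper's method buys is a mechanism that scales to general partial torsion fields $K_n$, where hand identification of repeated factors is infeasible; what your direct route buys is a shorter, self-contained proof of Theorem \ref{thm:field-family}. Your identification of $K_3$ with $\QQ(\theta)$ via an auxiliary short Weierstrass twist and its $3$-division polynomial is likewise a detour: the paper reads $F_3(T)=T^4-6T^2-\alpha T-3$ straight off the Fueter form after noting twist-invariance of the $x$-coordinate field, and your ``short computation'' is really the same twist/M\"obius argument and should be spelled out.

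Two thin spots remain, neither a wrong idea but both places where real work is being waved at. First, the equivalence \eqref{item:red}$\Leftrightarrow$\eqref{item:sqf}$\Leftrightarrow$\eqref{item:j} is where the paper does most of its casework (Propositions \ref{prop:tate-alg} and \ref{prop:tate-alg2}, a full run of Tate's algorithm including a lengthy analysis at $p=2$); your proposal reduces this to ``one checks.'' In particular you must rule out primes dividing the numerator of $t$ (equivalently $\beta$), which give type $I_{4v_p(\beta)}$, and you must do the $2$-adic bookkeeping that explains the ``possible factor of $4$'' in \eqref{item:sqf}. Second, your stated mechanism ``a prime dividing $\alpha-8$ to a power $\ge 2$ would force a symbol $I_n^*$ with $n\ge 2$'' is not quite right: for even valuation of the denominator factor the given model is non-minimal and the type is multiplicative $I_{v_p}$, while $I^*_{v_p}$ occurs only for odd valuation (Proposition \ref{prop:tate-alg}(3)). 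The conclusion (some disallowed symbol appears, hence squarefreeness) survives, but the parity casework has to be carried out as in the paper.
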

Some examples of small values of $\alpha$ for which $K_3$ is monogenic are:
$$\pm2, \pm3, \pm5, \pm6, \pm7, \pm9, \pm11, \pm13, \pm14, \pm15, \pm18, \pm21, \pm22, \pm23, \pm25.$$
The methods used in the proof turn information about reduction properties of an elliptic curve into information about the index $[\mathcal{O}_{\QQ(\theta)} : \ZZ[\theta]]$ where $\theta$ is a special value of an elliptic function (namely, a zero of a division polynomial).  Theorem \ref{thm:ec-family} is meant primarily to showcase our methods.  A more detailed analysis using these same methods can provide bounds and even exact formulae for the discriminants of partial torsion fields in general.  This will be described in a follow-up paper by the second author.  

In fact, Fleckinger and V\'erant studied the number fields of Theorem \ref{thm:field-family}, motivated by their status as partial torsion fields \cite{FV}.  However, as they write, ``We note that the arithmetic of elliptic curves is not used once we have these polynomials.''  They describe a basis for the ring of integers in general (which is not a power basis), and show that they are quartic $S_4$ fields.  See Section \ref{sec:ant}.

There is an abundance of literature on both monogenic number fields and number fields obtained by adjoining torsion points of elliptic curves.  Monogenicity is rare:  while our favourites, including quadratic fields, and the cyclotomic fields, are monogenic, it is known for example that almost all abelian extensions of $\QQ$ with degree coprime to $6$ are non-monogenic \cite{Gras}.  For an in-depth bibliography of monogenicity, see Narkiewicz \cite[pp. 79-81]{Nark} and the book of Ga\'al \cite{MR1896601}, and for fundamental algorithmic work, see Gy\H ory \cite{MR0437489}. We content ourselves here with listing a few recent works concerning monogenic quartic fields. In \cite{spear}, Spearman describes an infinite family of $A_4$ monogenic fields arising from $x^4+18x^2-4tx +t^2+81$ when $t(t^2+81)$ is squarefree.  The $D_8$ fields are studied by Kable \cite{MR1688180} and Huard, Spearman, and Williams \cite{MR1321725}. While the pure quartic case is investigated by Funakura, who finds infinitely many monogenic fields \cite{MR0779772}.  Fleckinger and V\'erant also have a monogenic family which appears to be $D_8$ \cite[(2)]{FV}. In \cite{tangras}, Gras and Tano\'e list necessary and sufficient conditions for certain biquadratic extensions of $\QQ$ to be monogenic; Motoda constructs an infinite family \cite{MR2017249}.  It is also known that infinitely many quartic cyclic fields are non-monogenic, by work of Motoda, Nakahara, Shah and Uehara \cite{MR2605782} and also Olajos \cite{MR2169516}.  As for $S_4$ fields, little is known; however, B\'erczes, Evertse and Gy\H ory restrict the multiply monogenic orders in such fields \cite{MR3114010}.  See the experimental data in Section \ref{sec:exp} for three more families of quartic fields which appear to be monogenic. 

The field over which the $n$-torsion points of an elliptic curve are defined is often denoted $\QQ(E[n])$ and plays a crucial role in the study of elliptic curves and their Galois representations.  It is often referred to as a division field or a torsion field.  For a survey, see \cite{MR1836119}.  In general, the discriminants of such fields are not known, although there has been some work on their ramification \cite{MR2028514, MR3438391, MR3501021}. In the case when $n$ is prime the different has been computed \cite{KrausCali, Kraus}. In the case of $3$-division fields, generators, Galois groups and subfields have been very explicitly described \cite{BP2012}; see \cite{BP2016} for higher order. However, little similar work has been done on the subfields defined by division polynomials.

The Fueter polynomial we study arises from changing coordinates to the Fueter form of an elliptic curve:  this choice has a history in explicit class field theory.  Specifically, in \cite{ct87}, Cassou-Nogu\`{e}s and Taylor pursue Kronecker's Jugendtraum for certain ray class fields of imaginary quadratic fields. They study elliptic curves with complex multiplication and good reduction away from $2$. Let $K$ be an imaginary quadratic field with discriminant $d_K<-4$ and suppose $2$ splits in $K$. If $I$ is any $\mathcal{O}_K$ ideal, let $K(I)$ denote the ray class field of $K$ mod $I$. Now suppose $\xi$ is an odd $\mathcal{O}_K$ ideal. Cassou-Nogu\`{e}s and Taylor show that $\mathcal{O}_{K(4\xi)}$ is monogenic over $\mathcal{O}_{K(4)}$, using special values of the coordinates of the Fueter form.  

Although the methods and the class of monogenic fields found in \cite{ct87} differ, we adopt their use of the Fueter form to access special values of an elliptic function.  
It is remarkable that in the non-CM case, these special values still seem to offer some advantage in describing partial torsion fields explicitly, in the form of monogenic generators.  Is it possible that these special values provide the best power basis for general partial torsion fields?

Our main method involves two ingredients:  the algorithm of Gu\`ardia, Montes and Nart \cite{gmn12}, which computes $[\mathcal{O}_{\QQ(\theta)}:\ZZ[\theta]]$; and the $p$-adic valuations of division polynomials (in particular, $T^4 - 6T^2 - \alpha T - 3$, the $3$-division polynomial in Fueter form), which are computed in detail in work of the third author \cite{s16}.   A basic description of the Montes algorithm is to be found in Section \ref{sec:montes}.  Briefly, the algorithm uses the Newton polygon to compute $v_p([\mathcal O_{\QQ(\theta)}:\ZZ[\theta]])$ in terms of the number of lattice points on and under the polygon. The simplest case is a polygon which bounds no points, and this case corresponds to the $p$-adic valuation being 0.  Thus, by picking $\alpha$ so that all the polygons are simple, we ensure that the corresponding field is monogenic.

It is possible to apply the Montes algorithm to the polynomial $T^4 - 6T^2 - \alpha T - 3$ directly, but the computations are rather involved.  This would provide a proof of Theorem \ref{thm:field-family}, but it would not demonstrate the new methods dependent upon interpreting the polynomial as a division polynomial of an elliptic curve.  In particular, the efficient choice of lift $\phi_i$ (see Section \ref{sec:montes}) is guided by the elliptic curve.

One can view this project as part of the study the discriminants of number fields associated with Latt\`es maps.  Briefly, if $\psi\colon E \to E$ is an elliptic curve endomorphism and $\pi\colon E \to \mathbb P^1$ a finite covering, then a rational map $\phi\colon \mathbb P^1 \to \mathbb P^1$ is a Latt\`es map if $\pi \circ \psi = \phi \circ \pi$.  For example, one may take $\psi(P) = [n]P$ and $\pi(x,y) = x$.  The corresponding Latt\`es map has degree $n^2$, and it is from these maps that the division polynomials are derived (see Section \ref{sec:div-pol}). 

The idea to compute the discriminants of number fields associated to Latt\`es maps is motivated by similar computations done for the power maps and Chebyshev polynomials.  These three families of maps---Latt\`es, Chebyshev, and power---are postcritically finite.  Consequently, if $f$ is a member of any one of these families, then the tower of number fields generated by $f^n(x) - c$ is unramified outside a finite set of primes \cite{ch12}.  In some sense this simplifies the computation of the index as only finitely many primes need be analysed.  In the case that $f$ is a Chebyshev or power map, the first author has used the Montes algorithm to compute the field discriminant precisely, and produced infinite towers of monogenic fields \cite{g14, g17}.  In the case of the $n$-division polynomial, we need only consider the primes dividing $n$ and the discriminant of the curve.  The shape of the Newton polygons tend to evolve predictably from one iterate to the next.

\subsection*{Acknowledgements}  The authors are indebted to David Grant, \'{A}lvaro Lozano-Robledo and Joseph H. Silverman for helpful conversations.

\section{The Montes Algorithm} \label{sec:montes}

In this section we give a basic description of the Montes algorithm so that Theorem \ref{th:gmn} is understood.  We refer more interested readers to \cite{gmn12} for the full details.

Let $\Phi \in \ZZ[x]$ be a monic irreducible polynomial whose root $\theta$ generates a number field $K$, and denote by $\mathcal{O}_K$ the ring of integers of $K$. Define $\ind \Phi = [ \mathcal{O}_K : \ZZ[\theta] ]$.  Let $\ind_p \Phi = v_p(\ind \Phi)$ denote the $p$-adic valuation of $\ind \Phi$. The value $\ind_p\Phi$ may be computed as follows.

First, factor $\Phi$ modulo $p$ and write
\begin{align*}
\Phi(x) \equiv \phi_1(x)^{e_1} \cdots \phi_r(x)^{e_r} \pmod p,
\end{align*}
where the $\phi_i \in \ZZ[x]$ are monic lifts of the irreducible factors of $\Phi$ modulo $p$. The algorithm will terminate regardless of the choice of lifts, however this choice may simplify the computations significantly.

For each factor $\phi_i$, there is a unique expression
\begin{align*}
\Phi(x) = a_0(x) + a_1(x)\phi_i(x) + a_2(x)\phi_i(x)^2 + \cdots + a_s(x)\phi_i(x)^s,
\end{align*}
where the $a_j$ are integral polynomials satisfying $\deg a_j < \deg \phi_i$. This expression is called the \emph{$\phi_i$-development} of $\Phi$.

From the $\phi_i$-development, construct the \emph{$\phi_i$-Newton polygon} by taking the lower convex hull of the points
\begin{align} \label{eq:Newton poly}
\left\{\big(j,v_p(a_j(x))\big): 0 \le j \le s\right\},
\end{align}
where $v_p(a_j(x))$ is defined to be the minimal $p$-adic valuation of the coefficients of $a_j(x)$. Only the sides of negative slope are of import, and we call the set of sides of negative slope the \emph{$\phi_i$-polygon}. The set of lattice points under the $\phi_i$-polygon in the first quadrant carries important arithmetic data, and to keep track of these points, we define
\begin{align*}
\ind_{\phi_i}(\Phi) = (\deg \phi_i)\cdot \#\{ (x,y) \in \NN^2: \text{ $(x,y)$ is on or under the $\phi_i$-polygon}\}.
\end{align*}

To each lattice point on the $\phi_i$-polygon, we attach a \emph{residual coefficient}
\begin{align*}
        \res(j) =
\begin{cases*}
        \red(a_j(x)/p^{v_p(a_j(x))}) & if $\big(j,v_p(a_j(x))\big)$ is on the $\phi_i$-polygon\\
        0 & otherwise,
\end{cases*}
\end{align*}
where $\red: \ZZ[x] \to \bbf_p[x]/(\phi_i(x))$ denotes the reduction map modulo $p$ and $\phi_i$.
For any side $S$ of the $\phi_i$-polygon, denote the left and right endpoints of $S$ by $(x_0,y_0)$ and $(x_1,y_1)$, respectively. We define the \emph{degree} of $S$ to be $\deg S = \gcd(y_1-y_0,x_1-x_0)$. In other words, $\deg S$ is equal to the number of segments into which the integral lattice divides $S$. We associate to $S$ a \emph{residual polynomial}
\begin{align*}
R_S(y) = \sum_{i=0}^{\deg S} \res\left(x_0+i\frac{(x_1-x_0)}{\deg S}\right)y^i \in \bbf_p[x]/(\phi_i(x))[y].
\end{align*}
We note that $\res(x_0)$ and $\res(x_1)$ are necessarily non-zero, and in particular, it is always the case that $\deg S = \deg R_S$.

Finally, if $R_S$ is separable for each $S$ of the $\phi_i$-polygon, then $\Phi$ is \emph{$\phi_i$-regular}, and if $\Phi$ is $\phi_i$-regular for each factor $\phi_i$, then $\Phi$ is \emph{$p$-regular}.

\begin{theorem}[Theorem of the index] \label{th:gmn}
We have
\begin{align*}
\ind_p\Phi \ge \sum_{i=1}^r \ind_{\phi_i}(\Phi)
\end{align*}
with equality if $\Phi$ is $p$-regular.
\end{theorem}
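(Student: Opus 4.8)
This theorem goes back to Ore; in the generality stated it is part of the work of Gu\`ardia, Montes and Nart \cite{gmn12}, which also supplies the higher-order machinery for the non-regular case, so I will only sketch the approach. The first step is to localize at $p$. Hensel-lifting the pairwise coprime factorization $\Phi \equiv \phi_1^{e_1}\cdots\phi_r^{e_r}$ modulo $p$ produces monic $\Phi_i \in \ZZ_p[x]$ with $\Phi = \Phi_1\cdots\Phi_r$ and $\Phi_i \equiv \phi_i^{e_i}$ modulo $p$. Then $\ZZ_p[x]/(\Phi) \cong \prod_i \ZZ_p[x]/(\Phi_i)$, this decomposition is compatible with $\mathcal{O}_K\otimes\ZZ_p \cong \prod_{\mathfrak p\mid p}\mathcal{O}_{K_{\mathfrak p}}$, and since $\ind_p\Phi$ is the $\ZZ_p$-length of $(\mathcal{O}_K\otimes\ZZ_p)/\ZZ_p[\theta]$ it is additive: $\ind_p\Phi = \sum_i\ind_p\Phi_i$. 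Because the $\phi_i$-polygon of $\Phi$ coincides with that of $\Phi_i$ (the complementary factor $\prod_{k\ne i}\Phi_k$ is a $\phi_i$-adic unit, hence contributes a trivial polygon under the additivity of Newton polygons), it suffices to prove the theorem with $\Phi_i$ in place of $\Phi$; that is, we may assume $r = 1$, so $\Phi \equiv \phi^e$ modulo $p$.

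With $r=1$, I would invoke the two theorems of Ore attached to the $\phi$-polygon. The theorem of the polygon says that each side $S$ of slope $-h/e'$ in lowest terms and horizontal length $\ell$ cuts out a factor $\Phi_S \mid \Phi$ over $\QQ_p$ of degree $\ell\deg\phi$, and that $e' \mid e(\mathfrak p/p)$ for every prime $\mathfrak p$ of $K$ corresponding to $\Phi_S$. The theorem of the residual polynomial says that if $R_S$ is separable it splits into distinct irreducibles $\psi$ over $\bbf_p[x]/(\phi)$, with exactly one prime $\mathfrak p$ per factor, satisfying $e(\mathfrak p/p) = e'$ and $f(\mathfrak p/p) = \deg\phi\cdot\deg\psi$. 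Thus $p$-regularity (every $R_S$, for every $\phi_i$, separable) determines the complete decomposition type of $p$ in $\mathcal{O}_K$, and with it the exponent of each $\mathfrak p$ in the different.

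To extract the index I would compare two computations of $v_p(\disc\Phi)$. On one hand $v_p(\disc\Phi) = 2\ind_p\Phi + v_p(\disc K)$, with $v_p(\disc K) = \sum_{\mathfrak p\mid p} f(\mathfrak p/p)\,d(\mathfrak p/p)$, where $d(\mathfrak p/p)$ is the different exponent. On the other hand $\disc\Phi = \pm\operatorname{Res}(\Phi,\Phi')$, and a lattice-point count on the $\phi$-adic development expresses $v_p(\operatorname{Res}(\Phi,\Phi'))$ as $2\ind_{\phi}(\Phi)$ plus precisely the ``generic'' different contribution of the primes that the polygon predicts. Since the true different exponents always dominate these generic values, with equality exactly when the residual polynomials are separable, comparing the two expressions yields $\ind_p\Phi \ge \ind_{\phi}(\Phi)$ in general and equality under $p$-regularity. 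The inequality can also be obtained directly: to each lattice point $(j,k)$ lying on or under the $\phi$-polygon one attaches, using the $\phi$-adic expansion, an element of $\QQ(\theta)$ of the form $p^{-k}g(\theta)$ with $g$ of $\phi$-degree $j$ that is an algebraic integer but not in $\ZZ_p[\theta]$, and these are $\ZZ_p$-independent in $(\mathcal{O}_K\otimes\ZZ_p)/\ZZ_p[\theta]$.

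The genuine obstacle is the equality clause — showing that $p$-regularity alone forces equality. The inequality is soft, requiring only enough integral elements; ruling out any further, ``hidden'' index contribution amounts to showing that when every $R_S$ is separable the order-one Ore data is already complete, so that the Newton-polygon-and-residual-polynomial analysis does not have to be iterated. Making this precise is exactly what the higher-order polygons of \cite{gmn12} are built to do, so in practice I would cite \cite{gmn12} for equality and give only the localization reduction and the explicit construction above for the inequality. In the application of the present paper the situation is in any case the simplest possible: the relevant $\phi_i$-polygons bound no lattice points, so $\ind_{\phi_i}(\Phi) = 0$, the polynomial is trivially $p$-regular, and the theorem yields $\ind_p\Phi = 0$.
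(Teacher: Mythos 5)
Your proposal is fine: the paper itself gives no argument for this theorem beyond citing \cite[\S 4.4]{gmn12}, and your sketch (localization via Hensel lifting, Ore's theorems of the polygon and residual polynomial, the discriminant/lattice-point comparison for the inequality) is an accurate outline of what that reference does, with the genuinely hard equality clause correctly deferred to the same citation. So in substance you take the same route as the paper, just with more detail than it records.
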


\begin{proof}
See \cite[\S 4.4]{gmn12}.
\end{proof}

For our purposes, we need only the following simple corollary.

\begin{proposition}\label{prop:montes-for-us}
  If $\Phi$ is monic, and $v_p(a_0) = 1$ for each $\phi_i$-development, then $\ind_{p} \Phi = 0$.
\end{proposition}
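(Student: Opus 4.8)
The plan is to reduce everything to the theorem of the index (Theorem~\ref{th:gmn}). Concretely, I would show that the hypothesis $v_p(a_0)=1$ forces each $\phi_i$-polygon to be a single line segment joining $(0,1)$ to $(e_i,0)$; that no lattice point with both coordinates $\ge 1$ lies on or below such a segment, so that each $\ind_{\phi_i}(\Phi)=0$; and that $\Phi$ is automatically $p$-regular. Theorem~\ref{th:gmn} then gives $\ind_p\Phi=\sum_{i=1}^r\ind_{\phi_i}(\Phi)=0$.

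The first step is to pin down the shape of the $\phi_i$-Newton polygon. Since $\phi_i^{e_i}$ exactly divides $\Phi$ modulo $p$, reducing the $\phi_i$-development mod $p$ and using uniqueness of the $\bar\phi_i$-development over $\FF_p$ shows that $v_p(a_j)\ge 1$ for $0\le j\le e_i-1$ while $v_p(a_{e_i})=0$. Hence the leftmost vertex of the polygon is $(0,v_p(a_0))=(0,1)$ and the principal (negative-slope) part of the polygon terminates at $(e_i,0)$. Because the vertices of a negative-slope lower convex hull have strictly decreasing integer ordinates, and there is no integer strictly between $1$ and $0$, this principal part has no intermediate vertex: it is precisely the segment $S$ of slope $-1/e_i$ from $(0,1)$ to $(e_i,0)$.

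The two remaining assertions then follow at once. For the index: at every $x\ge 1$ the height of $S$ is $1-x/e_i<1$, so a lattice point $(x,y)$ with $x,y\ge 1$ can never satisfy $y\le 1-x/e_i$; thus there are no such points and $\ind_{\phi_i}(\Phi)=0$. For regularity: $\deg S=\gcd(e_i,1)=1$, so the residual polynomial $R_S$ is linear, hence separable, so $\Phi$ is $\phi_i$-regular; as this holds for every $i$, the polynomial $\Phi$ is $p$-regular and the inequality of Theorem~\ref{th:gmn} is an equality. The only step that takes a little care is locating the right endpoint of the principal polygon at $(e_i,0)$ — i.e., checking that $\bar\phi_i$ divides $\bar\Phi$ to order exactly $e_i$ and that $v_p(a_j)\ge 1$ for $j<e_i$; once this is in hand, the consecutiveness of $0$ and $1$ does the rest, simultaneously forcing $S$ to be a single segment, forcing $\deg S=1$, and excluding lattice points beneath it. Everything else is routine unwinding of the definitions of Section~\ref{sec:montes}.
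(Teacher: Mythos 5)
Your proof is correct and follows essentially the same route as the paper: the hypothesis $v_p(a_0)=1$ forces each $\phi_i$-polygon to be a single segment dropping from $(0,1)$ to the horizontal axis, which excludes all lattice points with positive coordinates and makes the residual polynomial linear, hence separable, so Theorem~\ref{th:gmn} applies with equality. Your extra step identifying the right endpoint as exactly $(e_i,0)$ (via exact divisibility of $\bar\Phi$ by $\bar\phi_i^{e_i}$) is correct but not needed; the paper simply takes the endpoint to be $(k_0,0)$ for the first index $k_0$ with $v_p(a_{k_0})=0$, which suffices for the same conclusion.
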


\begin{proof}
  The Newton polygon for each $\phi_i$-development has exactly one side of negative slope to consider, running from $(0,1)$ to $(k_0, 0)$ for some $0 < k_0 \le s$.  Therefore there are no points under or on the segment, and $\Phi$ is $p$-regular.  The result follows from Theorem \ref{th:gmn}.
\end{proof}

\section{Fueter form and curves with a point of order $4$}

The goal of this section is to examine a particular one-parameter family of elliptic curves, namely a normal form for a curve with a rational point of order $4$ (although often called Tate's normal forms, such families of curves with rational $n$-torsion were known in the 19th century).  This family was suggested by experimental data.  In the next section we exhaustively analyse the valuations of special values of division polynomials for this family, describing all situations in which the Montes algorithm can be applied.

\subsection{Tate and Fueter forms}
\label{sec:tate-fueter-form}

Tate's normal form for an elliptic curve with a rational point of order $4$ is given by the Weierstrass form
\begin{equation}
  \label{eqn:tate}
  E: y^2 + (\alpha + 8 \beta) xy + \beta (\alpha + 8 \beta)^2 y = x^3 + \beta (\alpha + 8 \beta) x^2,
\end{equation}
where $\alpha, \beta \in \QQ$.  Though, by a change of coordinates, we may assume that $\alpha, \beta \in \ZZ$ and are coprime.  Up to isomorphism, this is a one-parameter family of curves with $(0,0)$ being a point of order $4$.  The invariants are:
\begin{equation}
  \label{eqn:invariants}
  \Delta = \beta^4(\alpha - 8\beta)(\alpha + 8 \beta)^7, \quad j = \frac{ (\alpha^2 - 48 \beta^2)^3 }{\beta^4(\alpha - 8\beta)(\alpha + 8\beta)}.
\end{equation}
Throughout the remainder of the paper, we will often use $a := \alpha + 8 \beta$ for ease of notation.

Appling the change of coordinates
\begin{equation}
  \label{eqn:change-of-coord}
  (x,y) = \left( \frac{a \beta}{T} - a\beta, \frac{1}{2} \left( \frac{ (a\beta)^{\frac32}T_1}{T^2} - \frac{a^2\beta}{T} \right) \right),
\end{equation}
one obtains
\[
  T_1^2 = T\left(4T^2 + \frac{\alpha}{\beta} T + 4\right),
\]
which is known as a Fueter curve \cite{ct87}.  The identity of the group is $(T,T_1) = (0,0)$, and the point $Q_0 := (1,\sqrt{a/\beta})=(1,\sqrt{8+\alpha/\beta})$ is a point of order $4$.  Note that this change of coordinates is defined over a potentially quadratic extension $\QQ(\sqrt{a\beta})$ but that the field of definition of the $x$-coordinate of a point is the same as the field of definition of the corresponding $T$ coordinate.

Suppose $p$ is a prime at which $E$ has bad reduction. If $p \mid a$ or $p \mid \beta$, then the singular point modulo $p$ on the Weierstrass curve, namely $(0,0)$, becomes $Q_0$ modulo $p$ on the Fueter curve. However, if $p \mid (\alpha-8\beta)$, then the singular point modulo $p$ on the Weierstrass curve, namely $(-2^5\beta^2,2^7\beta^3)$, becomes $(-1,0)$ modulo $p$ on the Fueter curve. Generally, when $p$ is an odd prime that divides $\alpha-8\beta$, a rational lift of the singular point will not necessarily exist.

\subsection{Division polynomials, Weierstrass and Fueter}
\label{sec:div-pol}

By definition, the $n$-th division polynomial $\Psi_n(x,y)$ for an elliptic curve $E$ in Weierstrass form
\[
  E: y^2 + a_1xy + a_3 = x^3 + a_2x^2 + a_4x + a_6
\]
has the property that
\[
  [n](x,y) = \left( \frac{\phi_n(x,y)}{\Psi_n(x,y)^2}, \frac{\omega_n(x,y)}{\Psi_n(x,y)^3} \right),
\]
where $\phi_n, \omega_n, \Psi_n$ are coprime polynomials.  It can also be defined by stipulating that $\Psi_1(x,y) = 1, \Psi_2(x,y) = 2y + a_1x + a_3$ and for $n > 2$,
\[
  \Psi_n(x,y) = \begin{dcases*}
    n \sideset{}{'}\prod_{P \in E[n]\setminus\{\mathcal{O}\}} (x - x(P)) & $n$ is odd \\
    \frac{n}{2} \Psi_2(x,y) \sideset{}{'}\prod_{P \in E[n]\setminus E[2]} (x - x(P)) & $n$ is even,
  \end{dcases*}
\]
where the $'$ on the product indicates that we include only one of each pair $P$ and $-P$ in the product.  In particular,
\begin{align*}
&\Psi_1=1,\\
&\Psi_2=2y+a_1x+a_3,\\
&\Psi_3=3x^4+b_2x^3+3b_4x^2+3b_6x+b_8,\\
&\Psi_4=\Psi_2(2x^6+b_2x^5+5b_4x^4+10b_6x^3+10b_8x^2+(b_2b_8-b_4b_6)x+(b_4b_8-b_6^2)).
 \end{align*}
The odd division polynomials have degree $\frac{n^2-1}{2}$ in $x$.  The $n$-th division polynomial has divisor $\sum_{P \in E[n]} (P) - n^2 (\mathcal{O})$.  The group law of the elliptic curve manifests as a recurrence relation among the $\Psi_n$, $\omega_n$ and $\phi_n$; in particular, for $n \ge 3$,
\begin{equation}
  \label{eqn:psi-rec}
  \Psi_{2n-1} = \Psi_{n+1}\Psi_{n-1}^3 - \Psi_{n-2}\Psi_{n}^3, \quad
  \Psi_{2n} \Psi_2 = \Psi_n \left( \Psi_{n+2}\Psi_{n-1}^2 - \Psi_{n-2}\Psi_{n+1}^2 \right).
\end{equation}
Therefore, having computed the first four division polynomials directly, we can obtain all the others recursively.

The discriminants of division polynomials have been computed by Verdure:
\begin{theorem}[{\cite[Theorem 1]{Verdure}}]
  \label{thm:psi-disc}
  \[
    \operatorname{Disc}(\Psi_n) =
    \begin{cases*}
      (-1)^{\frac{n-1}{2}} n^{\frac{n^2-3}{2}} \Delta^{ \frac{n^4 - 4n^2 + 3}{24} }  & $n$ odd \\
      (-1)^{\frac{n-2}{2}} 16 n^{\frac{n^2-6}{2}} \Delta^{ \frac{n^4 - 10n^2 + 24}{24} } & $n$ even.
    \end{cases*}
  \]
\end{theorem}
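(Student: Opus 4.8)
The plan is to split $\Disc(\Psi_n)$ into a power of $\Delta$ times a power of $n$, get the former by a weighted‑homogeneity argument, and get the latter (with the sign) by passing to $\CC$ and invoking the Weierstrass $\sigma$‑function. Work over the universal Weierstrass curve, so $\Psi_n\in\ZZ[a_1,\dots,a_6][x]$; it suffices to prove the identity there. For $n$ odd, $\Psi_n$ is a polynomial in $x$ of degree $d=\tfrac{n^2-1}{2}$ and leading coefficient $n$; for $n$ even one writes $\Psi_n=(2y+a_1x+a_3)\,g_n(x)$ and runs the argument with $g_n$, the extra $16$ coming from $\disc(4x^3+b_2x^2+2b_4x+b_6)=16\Delta$. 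Describing the odd case: from $\disc(f)=(-1)^{d(d-1)/2}\operatorname{lc}(f)^{-1}\operatorname{Res}(f,f')$, $\operatorname{Res}(f,f')=\operatorname{lc}(f)^{d-1}\prod_{f(r)=0}f'(r)$, and $\Psi_n=n\prod_{[P]}(x-x(P))$ over the $\pm$‑classes $[P]$ of nonzero $n$‑torsion points (with pairwise distinct $x$‑coordinates at the generic point), one obtains
\[
  \Disc(\Psi_n)=(-1)^{d(d-1)/2}\,n^{\,d-2}\prod_{[P]}\Psi_n'(x(P)),
\]
so it remains to identify $\prod_{[P]}\Psi_n'(x(P))$ as $\pm\,n\,\Delta^{(n^4-4n^2+3)/24}$ and to fix the sign.

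For the power of $\Delta$: if $\wp$ is a height‑one prime of $\ZZ[a_1,\dots,a_6]$ other than $(\Delta)$ and the $(p)$ with $p\mid n$, then over the residue field at $\wp$ the curve is still elliptic with $n$ invertible, so its $n$‑torsion has pairwise distinct $x$‑coordinates and $\Psi_n\bmod\wp$ is separable. Since $\Delta$ is irreducible in $\ZZ[a_1,\dots,a_6]$, this forces $\Disc(\Psi_n)=\pm\,c\,\Delta^{k_n}$ with $c$ a positive integer supported on primes dividing $n$. Now assign weights $w(a_i)=i$, $w(\Delta)=12$: under $(x,y)\mapsto(u^2x,u^3y)$ each $x(P)$ has weight $2$, so $\Disc(\Psi_n)=n^{2d-2}\prod_{[P]<[Q]}(x(P)-x(Q))^2$ is isobaric of weight $4\binom d2=\tfrac{(n^2-1)(n^2-3)}{2}=12k_n$, giving $k_n=\tfrac{n^4-4n^2+3}{24}$; the same applied to $g_n$ (degree $\tfrac{n^2-4}{2}$) gives $\tfrac{n^4-10n^2+24}{24}$ in the even case.

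For the power of $n$ and the sign, pass to $\CC$, write $E=\CC/\Lambda$ with $x=\wp(z)+\text{const}$ and $2y+a_1x+a_3=\wp'(z)$, and use the classical identity $\Psi_n(z)=\pm\,\sigma(nz)/\sigma(z)^{n^2}$. Differentiating in $z$ and evaluating at $z=z_P$ (so $nz_P\in\Lambda$, $\sigma(nz_P)=0$) gives $\Psi_n'(x(P))\,\wp'(z_P)=\pm\,n\,\sigma'(nz_P)/\sigma(z_P)^{n^2}$ with $\sigma'(nz_P)$ a unit by quasi‑periodicity. Multiplying over the nonzero $P\in E[n]$ and using $x(\pm P)=x(P)$, $\wp'(\pm z_P)=\pm\wp'(z_P)$, one is left with a power of $n$ from the $n$ in $\sigma(nz)$, the classical product $\prod_P\sigma(z_P)^{-n^2}$ (a root of unity times a power of $n$ times a power of $\Delta$ --- essentially the product formula for $\Delta$), and $\prod_P\wp'(z_P)$ (controlled by $\operatorname{Res}(\Psi_n,(2y+a_1x+a_3)^2)$, again a monomial in $n,\Delta$). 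Collecting exponents gives $c=n$ (times $16$ for even $n$), and the sign $(-1)^{(n-1)/2}$ (resp.\ $(-1)^{(n-2)/2}$) comes out of the quasi‑period bookkeeping, best cross‑checked against $n=2,3,4$ --- in particular $\Disc(\Psi_3)=-27\Delta^2$, matching $T^4-6T^2-\alpha T-3$ later.

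The hard part is this last step: the $\Delta$‑exponent is essentially forced, but pinning down the \emph{exact} power of $n$ and the sign requires either careful bookkeeping of the $\sigma$‑function's quasi‑periods together with the classical evaluation of $\prod_{T\in E[n]}\sigma(z_T)$ (tantamount to the $q$‑product formula for $\Delta$), or a $p$‑adic analysis, for each $p\mid n$, of how $\Psi_n$ factors mod $p$ --- using that $\widetilde E[p^a]$ is cyclic or trivial to control the collisions among the $x$‑coordinates of the $n$‑torsion in characteristic $p$, while watching the ordinary/supersingular dichotomy and wild ramification. Differentiating the recurrences \eqref{eqn:psi-rec} to express $\Psi_n'$ through $\Psi_{n\pm1},\Psi_{n\pm2}$ gives an inductive alternative anchored by the small cases.
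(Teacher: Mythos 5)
First, note that the paper does not prove this statement at all: Theorem \ref{thm:psi-disc} is quoted verbatim from Verdure (the cited Theorem 1), so there is no internal argument to compare yours against; your attempt has to stand on its own as a proof of Verdure's formula.

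On its own terms, it establishes the easy half but not the hard half. The specialization argument at height-one primes of $\ZZ[a_1,\dots,a_6]$ together with the isobaric weight count ($w(a_i)=i$, $w(x)=2$, $w(\Delta)=12$) does pin down the exponent of $\Delta$ --- granting the true but unproved irreducibility of $\Delta$ in $\ZZ[a_1,\dots,a_6]$, without which the weight count cannot distribute the exponent among hypothetical factors of $\Delta$. But the actual content of the theorem is the exact constant: the precise power of $n$, the factor $16$, and the sign, and these are never established. You assert that ``collecting exponents gives $c=n$'' and that the sign ``comes out of the quasi-period bookkeeping,'' and then concede that this bookkeeping --- the factors $\sigma'(nz_P)$ (which are quasi-period exponentials, not ``units'' in any useful sense), the classical evaluation of $\prod_{P}\sigma(z_P)$, and the control of $\prod_{[P]}\wp'(z_P)$ via a resultant with $\Psi_2^2$ --- is ``the hard part,'' offering three alternative strategies (transcendental bookkeeping, $p$-adic analysis at $p\mid n$, induction via differentiated recurrences \eqref{eqn:psi-rec}) without carrying out any of them. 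Checking $n=2,3,4$ anchors nothing, since no induction is actually set up; as written, nothing excludes a different power of $n$ or the opposite sign, which is exactly what the theorem claims to determine. The even case has an additional gap: $\Psi_n$ involves $y$, so one must first fix what $\Disc(\Psi_n)$ means there, and writing $\Psi_n=\Psi_2\,g_n$ makes any discriminant-of-a-product computation pick up a squared resultant between the two factors; the parenthetical ``the extra $16$ coming from $\disc(4x^3+b_2x^2+2b_4x+b_6)=16\Delta$'' ignores that cross term and is even in tension with your own weight count, which attributes the entire stated $\Delta$-exponent $\frac{n^4-10n^2+24}{24}$ to $g_n$ alone. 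In short: correct strategy for the $\Delta$-exponent, but the determination of the $n$-power, the $16$, and the sign --- the heart of Verdure's theorem --- is outlined rather than proved.
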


In \cite{Fueter}, Fueter defined similar polynomials in $T$ and $T_1$ which we will call \emph{Fueter polynomials}.  In particular, for a Fueter curve $T_1^2 = T(4T^2 + \frac{\alpha}{\beta}T + 4)$, one defines $F_1 = 1, F_2 = \frac{T_1}{\sqrt{T}}$, and for $n > 2$, 
\[
  F_n = \begin{dcases*}
    \sideset{}{'}\prod_{P \in E[n]\setminus\{\mathcal{O}\}} (T - T(P)) & $n$ is odd \\
    \frac{n}{2} F_2 \sideset{}{'}\prod_{P \in E[n]\setminus E[2]} (T - T(P)) & $n$ is even.
  \end{dcases*}
\]
Here the above products are taken over the nontrivial $n$-torsion points with distinct $T$-coordinates. We also exclude the 2-torsion from the product when $n$ is even. The first few Fueter polynomials are:
\begin{align*}
&F_1=1,\\
&F_2=\frac{T_1}{\sqrt{T}},\\
&F_3=T^4-6T^2-\frac{\alpha}{\beta}T-3,\\ 
&F_4 = 2\frac{T_1}{\sqrt{T}}\left(T^6+\frac{\alpha}{\beta}T^5+10T^4-10T^2-\frac{\alpha}{\beta}T-2\right).
\end{align*}
Furthermore, they satisfy a recurrence relation:
\begin{align}
  \label{eqn:f-rec}
  F_{2n-1} &= (-1)^{n}(F_{n+1}F_{n-1}^3 - F_{n-2}F_{n}^3), \quad \notag \\
  F_{2n} F_2 &= (-1)^nF_n \left( F_{n+2}F_{n-1}^2 - F_{n-2}F_{n+1}^2 \right).
\end{align}
Our Fueter polynomials for odd $n$ coincide with those defined by Cassou-Nogu\`{e}s and Taylor in \cite[\S IV.3]{ct87}. However, our even Fueter polynomials are distinct. In making our definition, we wished to preserve the recurrence relation. 

One now observes that for odd $n$ (our primary interest), the polynomials $\Psi_n(x)$ and $F_n(T)$ define the same field extension.  We will refer to this field extension as the \emph{$n$-th partial torsion field}.  When $n$ is prime, it is the field of definition of the $x$-coordinate or $T$-coordinate of a single point of order $n$, which is generically of degree $(n^2-1)/2$.

Although we will only require the following proposition for odd $n$, we record the full relationship between the division polynomials of the Weierstrass and Fueter forms.
\begin{proposition}
  \label{prop:psi-to-f}
  Let $n$ be odd.  Then
  \[
    \Psi_n = (-1)^\frac{n-1}{2} \left( \frac{a\beta}{T} \right)^{\frac{n^2-1}{2}} F_n,
  \]
  where $F_n$ is a monic polynomial in $T$ of degree $\frac{n^2-1}{2}$.

  Let $n$ be even.  Then
  \[
    \Psi_n = (-1)^\frac{n+2}{2} \left( \frac{a\beta}{T} \right)^{\frac{n^2-1}{2}} F_n,
  \]
  where $F_n=\frac{n}{2}\frac{T_1}{\sqrt{T}}f_n$ with $f_n$ a monic polynomial in $T$ of degree $\frac{n^2-4}{2}$.
\end{proposition}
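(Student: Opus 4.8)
The plan is to compare the two sides as explicit polynomials by tracking their divisors, leading coefficients, and the effect of the change of coordinates \eqref{eqn:change-of-coord}. First I would recall that $\Psi_n(x)$ (for odd $n$, a polynomial in $x$ alone after using the curve equation) has divisor $\sum_{P\in E[n]\setminus\{\mathcal O\}}(P) - (n^2-1)(\mathcal O)$ on $E$, with leading coefficient $n$ in the variable $x$, by the product formula in Section~\ref{sec:div-pol}. Under \eqref{eqn:change-of-coord} we have $x = a\beta/T - a\beta = a\beta(1-T)/T$, so $x$ has a simple pole exactly where $T=0$, i.e.\ at the identity of the Fueter model, and $x - x(P)$ corresponds to $T - T(P)$ up to the factor $a\beta/T$ coming from clearing the denominator. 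Thus for odd $n$,
\[
  \Psi_n = n\sideset{}{'}\prod_{P}(x - x(P)) = n\sideset{}{'}\prod_{P}\frac{a\beta}{T}\bigl(\text{linear in }T\bigr),
\]
and collecting the $(n^2-1)/2$ factors $a\beta/T$ gives $\Psi_n = c_n (a\beta/T)^{(n^2-1)/2} F_n$ for some constant $c_n$, where $F_n$ is monic in $T$ of degree $(n^2-1)/2$ by construction. It remains to pin down $c_n = (-1)^{(n-1)/2}$.

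For the sign I would compare leading behaviour as $T\to 0$ (equivalently $x\to\infty$). Writing $x = a\beta(1-T)/T$, the substitution sends $x \mapsto -a\beta/T$ to leading order, so $\prod'_P(x-x(P))$ has leading term (in $1/T$) equal to $(-a\beta/T)^{(n^2-1)/2}$, i.e.\ $(-1)^{(n^2-1)/2}(a\beta/T)^{(n^2-1)/2}$. Since $\Psi_n = n\prod'_P(x-x(P))$ and $F_n$ is \emph{monic} in $T$, matching leading terms forces $c_n = n\cdot(-1)^{(n^2-1)/2}\cdot(\text{sign absorbed into }F_n\text{ being monic})$; a careful bookkeeping of the constant term of the linear factors together with $(-1)^{(n^2-1)/2} = (-1)^{(n-1)/2}$ for odd $n$ (since $(n^2-1)/2$ and $(n-1)/2$ have the same parity when $n$ is odd) yields $c_n=(-1)^{(n-1)/2}$. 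One can alternatively verify the constant directly for $n=3$ using $\Psi_3 = 3x^4+b_2x^3+3b_4x^2+3b_6x+b_8$ and $F_3 = T^4-6T^2-\tfrac{\alpha}{\beta}T-3$ against \eqref{eqn:change-of-coord}, and then invoke the recurrences \eqref{eqn:psi-rec} and \eqref{eqn:f-rec} to propagate the identity to all odd $n$ by induction.

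For even $n$, the same argument applies with the extra factor $\Psi_2 = 2y + a_1 x + a_3$. Under \eqref{eqn:change-of-coord} one computes $2y + a_1 x + a_3 = (a\beta)^{3/2}T_1/T^2 \cdot(\text{unit})$ after clearing denominators — indeed $2y$ contributes $(a\beta)^{3/2}T_1/T^2$ and the lower-order terms $a_1 x + a_3$ are $O(1/T)$, so $\Psi_2$ corresponds to $(a\beta/T)^{3/2}\cdot T_1/\sqrt T$ up to sign — and the remaining degree-$\tfrac{n^2-4}{2}$ factor $\prod'_{P\in E[n]\setminus E[2]}(x-x(P))$ contributes $(a\beta/T)^{(n^2-4)/2}$ times a monic polynomial $f_n$ in $T$. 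Combining, $\Psi_n = \tfrac{n}{2}\Psi_2\prod'(x-x(P)) = (-1)^{(n+2)/2}(a\beta/T)^{(n^2-1)/2}\cdot\tfrac{n}{2}\tfrac{T_1}{\sqrt T}f_n$, with the sign $(-1)^{(n+2)/2}$ determined exactly as in the odd case from the leading term $(-a\beta/T)^{(n^2-1)/2}$, the sign from $\Psi_2$, and the requirement that $f_n$ be monic.

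The main obstacle I expect is the sign and constant bookkeeping: the change of coordinates \eqref{eqn:change-of-coord} involves the half-integer powers $(a\beta)^{3/2}$ and the square root $\sqrt T$, and one must be careful that these are consistent with the stated normalization $F_2 = T_1/\sqrt T$ and with the compatibility of the square roots $\sqrt{a\beta}$ and $\sqrt{a/\beta}$ appearing in \eqref{eqn:change-of-coord} and in the point $Q_0$. A clean way to sidestep most of it is to prove the identity up to a constant by the divisor argument above — which is robust — then fix the constant once and for all at $n=1,2,3,4$ by the explicit formulas already recorded, and finally check that the constant propagates correctly through the recurrences \eqref{eqn:psi-rec}, \eqref{eqn:f-rec}, which is a routine parity induction on $n$.
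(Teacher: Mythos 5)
Your divisor/product argument does prove, cleanly, that for odd $n$ one has $\Psi_n = c_n\,(a\beta/T)^{(n^2-1)/2}F_n$ for \emph{some} constant $c_n$, and the degree and monicity statements come for free, since $F_n$ (and $f_n$ in the even case) are by definition monic products of the linear factors $T - T(P)$. The gap is in evaluating $c_n$. Writing $x - x(P) = -\frac{a\beta}{T\,T(P)}\,(T - T(P))$ and comparing with $\Psi_n = n\prod'_P(x - x(P))$ gives $c_n = n\,(-1)^{(n^2-1)/2}\big/\prod'_P T(P)$, so identifying $c_n$ with $(-1)^{(n-1)/2}$ is equivalent to the nontrivial evaluation $\prod'_P T(P) = (-1)^{(n-1)/2}\,n$ (equivalently $F_n(0) = (-1)^{(n-1)/2}\,n$; e.g.\ $F_3(0) = -3$). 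The factor $n$ coming from the leading coefficient of $\Psi_n$ must cancel against this product of $T$-coordinates, and that cancellation \emph{is} the content of the proposition; it cannot be extracted from leading-term matching or ``bookkeeping of the linear factors.'' Two specific assertions in your sign analysis are also wrong: as $T \to 0$ the substitution gives $x \sim +a\beta/T$, not $-a\beta/T$; and $(-1)^{(n^2-1)/2} \neq (-1)^{(n-1)/2}$ in general for odd $n$, since $(n^2-1)/2 = 2\cdot\frac{n-1}{2}\cdot\frac{n+1}{2}$ is always even (for $n=3$ the two exponents are $4$ and $1$). The same issue recurs in the even case, where your argument additionally needs $\prod'_{P \in E[n]\setminus E[2]} T(P) = \pm 1$.

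Your proposed fallback --- verify $n = 1,2,3,4$ against the explicit formulas and propagate through \eqref{eqn:psi-rec} and \eqref{eqn:f-rec} by an induction split by the parity of $n$ --- is exactly the paper's proof, and it does close the gap; note that the paper runs the \emph{entire} proposition through that induction (including checking that the even-index factor is a polynomial in $T$ via $T_1^2 = 4T^3 + \frac{\alpha}{\beta}T^2 + 4T$, tracking leading coefficients, and dividing by $\Psi_2 = (a\beta)^{3/2}T_1/T^2$), with no divisor argument at all. So to turn your write-up into a proof you must either carry out that induction in detail, or supply an independent evaluation of $\prod'_P T(P)$; as written, the step that carries all the arithmetic content is only gestured at.
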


\begin{proof}
Using the change of coordinates \eqref{eqn:change-of-coord}, we check the result directly for $n=1,2,3,4$.
Proceeding by induction, suppose we have the result for all $n<N$ and consider $\Psi_N$. 

{\bf Case I: $N$ odd.} In this case, letting $N=2m+1$, we have by \eqref{eqn:psi-rec} that
$$\Psi_N=\Psi_{2m+1}=\Psi_{m+2}\Psi_m^3-\Psi_{m-1}\Psi_{m+1}^3.$$
Suppose $m$ is even. Then, using \eqref{eqn:f-rec} and the inductive hypothesis,
\begin{align*}
  \Psi_N&=-\Big(\frac{a\beta}{T} \Big)^{\frac{(m+2)^2-1+3m^2-3}{2}}F_{m+2}F_m^3-\Big(\frac{a\beta}{T} \Big)^\frac{(m-1)^2-1+3(m+1)^2-3}{2}F_{m-1}F_{m+1}^3. \\
&=-\Big(\frac{a\beta}{T} \Big)^{\frac{(2m+1)^2-1}{2}}\left(F_{m+2}F_m^3 + F_{m-1}F_{m+1}^3\right) \\
&=-\Big(\frac{a\beta}{T} \Big)^{\frac{N^2-1}{2}}F_N. 
\end{align*}
Keeping in mind the relationship $T_1^2=4T^3+\frac{\alpha}{\beta}T^2+4T$, we remark that $F_N$ is a polynomial in $T$. 
Finally, 
the leading term of $F_N(T)$ is determined by $F_{m-1}F_{m+1}^3$, which has degree $(N^2-1)/2$ and is monic.

An analogous computation yields the result if $m$ is odd.

{\bf Case II: $N$ even.}  Letting $N=2m$, we have from \eqref{eqn:psi-rec} that
$$\Psi_2\Psi_N=\Psi_2\Psi_{2m}=\Psi_{m-1}^2\Psi_m\Psi_{m+2}-\Psi_{m-2}\Psi_m\Psi_{m+1}^2.$$
Again suppose $m$ is even. We have from \eqref{eqn:f-rec} and the inductive hypothesis that
\begin{align*}
\Psi_2\Psi_N&=-\Big(\frac{a\beta}{T} \Big)^{\frac{2(m-1)^2-2+m^2-1+(m+2)^2-1}{2}}F_{m-1}^2F_m F_{m+2}\\
&\quad\quad\quad+\Big(\frac{a\beta}{T} \Big)^{\frac{(m-2)^2-1+m^2-1+2(m+1)^2-2}{2}}F_{m-2} F_m F_{m+1}^2 \\
&=-\Big(\frac{a\beta}{T} \Big)^{\frac{(2m)^2+2}{2}}(F_{m-1}^2F_m F_{m+2} - F_{m-2} F_m F_{m+1}^2) \\
&=-\Big(\frac{a\beta}{T} \Big)^{\frac{N^2+2}{2}}F_2F_N.
\end{align*}
Dividing by $\Psi_2=\frac{(a\beta)^\frac{3}{2}T_1}{T^2}$ we obtain our desired expression. Note that 
\begin{align*}
&F_{m-1}^2F_m F_{m+2} - F_{m-2} F_m F_{m+1}^2\\
&=\frac{T_1^2}{T}
\left(
\frac{m^2+2m}{4}F_{m-1}^2f_m f_{m+2} - \frac{m^2-2m}{4}f_{m-2} f_m F_{m+1}^2 
\right).
\end{align*}
The quantity in the large parentheses is a polynomial in $T$, which, by induction, has leading term of degree $(N^2-4)/2$ with coefficient $m$. Finally, as before, if $m$ is odd, an analogous computation finishes the proof.
\end{proof}

We also record the discriminant of the odd Fueter polynomials.

\begin{proposition}\label{prop:f-disc}
  For $n$ odd, we have
  \[
    \operatorname{Disc}(F_n) =         (-1)^{\frac{n-1}{2}} n^{\frac{n^2-3}{2}} \left( \beta^{-2}(\alpha -8\beta)(\alpha + 8\beta) \right)^{ \frac{ n^4 - 4n^2 +3}{24} } .
  \]
\end{proposition}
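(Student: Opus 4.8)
The plan is to deduce this from Verdure's formula (Theorem~\ref{thm:psi-disc}) by tracking how the discriminant transforms under the M\"obius substitution \eqref{eqn:change-of-coord}. Write $d = \frac{n^2-1}{2}$ and $a = \alpha+8\beta$. By Proposition~\ref{prop:psi-to-f}, $F_n$ is monic of degree $d$; its roots are the $T$-coordinates $T(P)$ of a set of representatives $P$ for the $\pm$-orbits of nontrivial $n$-torsion, while $\Psi_n(x) = n\prod_P\bigl(x - x(P)\bigr)$ for $n$ odd. The change of coordinates \eqref{eqn:change-of-coord} gives $T(P) = \frac{a\beta}{x(P)+a\beta}$, hence
$$T(P) - T(P') \;=\; -\,a\beta\,\frac{x(P) - x(P')}{\bigl(x(P)+a\beta\bigr)\bigl(x(P')+a\beta\bigr)}.$$

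First I would take the product of the squares of these differences over all unordered pairs. Each index $P$ occurs in $d-1$ pairs, so the denominators collect into $\bigl(\prod_P(x(P)+a\beta)\bigr)^{2(d-1)}$, giving
$$\operatorname{Disc}(F_n) \;=\; (a\beta)^{d(d-1)}\,\frac{\prod_{P<P'}\bigl(x(P)-x(P')\bigr)^2}{\bigl(\prod_P(x(P)+a\beta)\bigr)^{2(d-1)}}.$$
Next, since $\Psi_n$ has leading coefficient $n$, one has $\prod_{P<P'}(x(P)-x(P'))^2 = \operatorname{Disc}(\Psi_n)/n^{2(d-1)}$, and evaluating $\Psi_n(x) = n\prod_P(x-x(P))$ at $x = -a\beta$ gives $\prod_P(x(P)+a\beta) = (-1)^d\Psi_n(-a\beta)/n$. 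Because the exponent $2(d-1) = n^2-3$ is even, the powers of $n$ and the sign $(-1)^d$ cancel, so $\operatorname{Disc}(F_n) = (a\beta)^{d(d-1)}\operatorname{Disc}(\Psi_n)/\Psi_n(-a\beta)^{2(d-1)}$.

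The one step needing a separate argument is the evaluation of $\Psi_n(-a\beta)$; note $x = -a\beta$ is the image of $T = \infty$ under \eqref{eqn:change-of-coord} (and is the $x$-coordinate of a $2$-torsion point, so it is not a root of $\Psi_n$ as $n$ is odd). Rewriting Proposition~\ref{prop:psi-to-f} as $\Psi_n = (-1)^{(n-1)/2}(a\beta)^d\,T^{-d}F_n$ and letting $T\to\infty$, monicity of $F_n$ forces $T^{-d}F_n(T)\to 1$, so $\Psi_n(-a\beta) = (-1)^{(n-1)/2}(a\beta)^d$. Substituting this together with $\operatorname{Disc}(\Psi_n) = (-1)^{(n-1)/2}n^{(n^2-3)/2}\Delta^{e}$ and $\Delta = \beta^4(\alpha-8\beta)(\alpha+8\beta)^7$, where $e = \frac{n^4-4n^2+3}{24}$, and using the arithmetic identity $d(d-1) = \frac{(n^2-1)(n^2-3)}{4} = 6e$, the powers of $a\beta$ combine as $\Delta^{e}(a\beta)^{-6e} = \bigl(\beta^{-2}(\alpha-8\beta)(\alpha+8\beta)\bigr)^{e}$, while the overall sign reduces to $(-1)^{(n-1)/2}$; this is exactly the claimed formula. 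There is no deep obstacle here: the work is entirely bookkeeping with signs and with the leading coefficient $n$ of $\Psi_n$, plus the limit evaluation of $\Psi_n(-a\beta)$ (which could alternatively be obtained by an induction paralleling the proof of Proposition~\ref{prop:psi-to-f}).
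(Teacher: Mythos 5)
Your argument is correct, and it reaches the same destination as the paper (reduce to Verdure's formula via the coordinate change of Proposition~\ref{prop:psi-to-f}), but the mechanics differ. The paper never touches individual roots: it writes $\Disc F_n = (a\beta)^{-2d(d-1)}\Disc\bigl((a\beta)^d F_n\bigr)$, identifies $(a\beta)^dF_n(T)$ with $\Psi_n\bigl(\tfrac{a\beta}{T}-a\beta\bigr)T^d$ up to sign, and then uses the invariance of the discriminant under polynomial reversal and translation together with its scaling behaviour under $T\mapsto a\beta T$ to get $\Disc F_n=(a\beta)^{-d(d-1)}\Disc\Psi_n$ in four lines; the only fact about $x=-a\beta$ it needs implicitly is that it is not a root of $\Psi_n$ (so the reversal preserves the degree). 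You instead expand $\prod_{P<P'}(T(P)-T(P'))^2$ directly through the M\"obius map, which forces you to evaluate $\prod_P(x(P)+a\beta)$, i.e.\ $\Psi_n(-a\beta)$; your limit argument $T\to\infty$ in Proposition~\ref{prop:psi-to-f}, using monicity of $F_n$, correctly gives $\Psi_n(-a\beta)=(-1)^{(n-1)/2}(a\beta)^d$ (one can check this for $n=3$: $\Psi_3(-a\beta)=-a^4\beta^4$), and since this quantity enters only to the even power $2(d-1)$, your signs and powers collapse to the same answer, with $d(d-1)=6e$ where $e=\tfrac{n^4-4n^2+3}{24}$. So your route costs one extra (correct) evaluation and more explicit bookkeeping, but in exchange it makes the leading-coefficient and sign cancellations completely transparent and produces the pleasant closed form for $\Psi_n(-a\beta)$, which the paper's slicker discriminant-identity chain never exhibits.
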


\begin{proof}
  To compute the discriminant, we use Proposition \ref{prop:psi-to-f}.  Let $d = (n^2-1)/2$, the degree of $\Psi_n$.  Let $n$ be odd.  Then,
\begin{align*}
  \Disc F_n(T) &= (a\beta)^{-2d(d-1)} \Disc (a\beta)^d F_n(T) \\
  &= (a\beta)^{-2d(d-1)} \Disc \left( \Psi_n\left( \frac{a\beta}{T} - a\beta \right) T^d \right) \\
  &= (a\beta)^{-2d(d-1)} \Disc ( \Psi_n( a\beta T - a\beta ) ) \\
  &= (a\beta)^{-d(d-1)} \Disc ( \Psi_n( T - a\beta ) ) \\
  &= (a\beta)^{-d(d-1)} \Disc ( \Psi_n( T ) ).
\end{align*}
Next, we use the discriminant of $E$ \eqref{eqn:invariants} and Theorem \ref{thm:psi-disc}.
\end{proof}

\subsection{Tate's algorithm}

The purpose of this subsection is to give a full analysis of the reduction of the curve $E$ in Tate's Weierstrass form, via Tate's algorithm.

\begin{proposition}
  \label{prop:tate-alg}
  Let $p$ be an odd prime, $p \mid \Delta$.  Let $\widetilde{E}$ denote the reduction of $E$ modulo $p$.  Let $f$ denote the exponent of $p$ in the conductor of $E$.  Let $c$ be the number of components in the special fiber over the minimal proper regular model of the curve over $\ZZ_p$.  Then: 
  \begin{enumerate}
    \item If $p \mid \beta$, then $f=1$, $c = 4 v_p(\beta)$, and $E$ has Kodaira type $I_{4v_p(\beta)}$.  In this case, $E$ is in minimal Weierstrass form with respect to $p$, and the point $(0,0)$ has singular reduction.
    \item If $p \mid (\alpha - 8 \beta)$, then $f = 1$ and $E$ has Kodaira type $I_{v_p(\alpha - 8\beta)}$.  Furthermore,
      \begin{enumerate}
	\item If $p \equiv 1 \pmod 4$, then $c = v_p(\alpha - 8\beta)$.
	\item If $p \equiv 3 \pmod 4$, then
	  \[
	    c = \begin{cases*}
	      1 & if $v_p(\alpha - 8\beta)$ is odd \\
	      2 & if $v_p(\alpha - 8 \beta)$ is even.
	    \end{cases*}
	  \]
      \end{enumerate}
      In these cases, $E$ is in minimal Weierstrass form with respect to $p$, and the point $(-2^5 \beta^2, 2^7 \beta^3)$ on $\widetilde{E}$ is singular.
    \item If $p \mid (\alpha + 8 \beta)$, we let $w=\lfloor\frac{v_p(\alpha+8\beta)}{2}\rfloor$. Then
      \begin{enumerate}
	\item If $v_p(\alpha + 8 \beta)$ is odd, then $f=2$, $c=4$, and $E$ has Kodaira type $I^*_{v_p(\alpha + 8 \beta)}$.
	\item If $v_p(\alpha + 8 \beta)$ is even, then $f=1$, $E$ has Kodaira type $I_{v_p(\alpha + 8 \beta)}$, and
	  \[
	    c = \begin{cases*}
	      v_p(\alpha + 8\beta) & if $\left( \frac{ \beta(\alpha + 8 \beta)p^{-2w} }{p} \right) = 1$ \\
	      2 &  if $\left( \frac{ \beta(\alpha + 8 \beta)p^{-2w} }{p} \right) = -1$.
	    \end{cases*}
	  \]
      \end{enumerate}
      When $p \mid (\alpha+8\beta)$, $E$ is in minimal Weierstrass form with respect to $p$ after the change of coordinates $(x,y)=(p^{2w}x',p^{3w}y')$ and the point $(0,0)$ has singular reduction.
  \end{enumerate}
\end{proposition}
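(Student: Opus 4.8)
The plan is to run Tate's algorithm on the Weierstrass model \eqref{eqn:tate}, organised by which of the three factors $\beta$, $\alpha-8\beta$ and $a=\alpha+8\beta$ of $\Delta$ the prime $p$ divides. First I would record that, since $\gcd(\alpha,\beta)=1$ and $p$ is odd, at most one of these three quantities is divisible by $p$ (if $p\mid\beta$ and $p\mid(\alpha\pm8\beta)$ then $p\mid\alpha$; if $p\mid(\alpha-8\beta)$ and $p\mid(\alpha+8\beta)$ then $p\mid16\beta$ and $p\mid2\alpha$), so $p\mid\Delta$ forces exactly one of the three cases. Next I would compute the usual quantities from \eqref{eqn:tate}: $b_2=a(\alpha+12\beta)$, $b_4=\beta a^3$, $b_6=\beta^2a^4$, $b_8=\beta^3a^5$, whence $c_4=a^2(\alpha^2-48\beta^2)$ and, via $c_6^2=c_4^3-1728\Delta$, the valuation of $c_6$. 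The elementary fact driving everything is that $\alpha^2-48\beta^2=(\alpha-8\beta)(\alpha+8\beta)+16\beta^2$ is a $p$-adic unit in each of the three cases; I will also use that a Weierstrass equation is minimal at $p$ as soon as $v_p(c_4)<4$ (since a non-minimal model admits a change of variables lowering $v_p(c_4)$ by a multiple of $4$).

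In cases (1) and (2) one has $v_p(c_4)=0$, so \eqref{eqn:tate} is already minimal and $\widetilde E$ has a node; hence $E$ has multiplicative reduction of type $I_n$ with $n=v_p(\Delta)$, equal to $4v_p(\beta)$ and to $v_p(\alpha-8\beta)$ respectively, and $f=1$. To decide split versus non-split I would locate the node of $\widetilde E$ from the singularity equations, obtaining $(0,0)$ in case (1) and $(-2^5\beta^2,2^7\beta^3)$ in case (2), and then translate it to the origin: the tangent cone of a Weierstrass cubic at $(x_0,y_0)$ is $v^2+a_1uv-(3x_0+a_2)u^2$, of discriminant $a_1^2+12x_0+4a_2$. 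In case (1) this is $\equiv\alpha^2$, a nonzero square, so the reduction is split and $c=n=4v_p(\beta)$. In case (2), using $\alpha\equiv8\beta\pmod p$, it equals $-64\beta^2=-(8\beta)^2$ mod $p$, a square exactly when $p\equiv1\pmod 4$; this gives $c=v_p(\alpha-8\beta)$ when $p\equiv1\pmod 4$ and $c\in\{1,2\}$ (by the parity of $v_p(\alpha-8\beta)$) when $p\equiv3\pmod 4$.

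Case (3), where $p\mid a$, is the substantive one because the model need not be minimal: here $v_p(c_4)=2v_p(a)$ and $v_p(\Delta)=7v_p(a)$. I would apply $(x,y)=(p^{2w}x',p^{3w}y')$ with $w=\lfloor v_p(a)/2\rfloor$ (the identity when $v_p(a)=1$), which produces an integral model with $v_p(c_4')\in\{0,2\}$, hence minimal, whose reduction is $y'^2=x'^3+\overline{\beta a p^{-2w}}\,x'^2$ when $v_p(a)$ is even and $y'^2=x'^3$ when $v_p(a)$ is odd. In the even case this is a node at the origin with tangent cone $y'^2=\overline{\beta a p^{-2w}}\,x'^2$, so the reduction is multiplicative of type $I_{v_p(a)}$ with $f=1$, split precisely when $\left(\frac{\beta(\alpha+8\beta)p^{-2w}}{p}\right)=1$, giving the asserted values of $c$. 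In the odd case $\widetilde{E'}$ has a cusp, hence additive reduction; since $v_p(j)=-v_p(a)<0$ the reduction is potentially multiplicative, which pins the type as $I^*_{v_p(a)}$ (consistent with the minimal discriminant valuation $v_p(a)+6$) and gives $f=2$ (tame, since every quadratic extension of $\QQ_p$ is tamely ramified for $p$ odd). In all subcases the singular point of $\widetilde{E'}$ is the origin, the reduction of $(0,0)$, which is the final assertion of the proposition.

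It remains to obtain $c=4$ in case (3) when $v_p(a)$ is odd. For type $I^*_n$ with $n$ odd the geometric component group $\Phi$ is cyclic of order $4$, so it suffices to exhibit an $\FF_p$-rational point whose image in $\Phi$ has order $4$, forcing $\Phi(\FF_p)=\ZZ/4\ZZ$. I would use $P=(0,0)$, a rational point of order $4$ on $E$, with $2P=(-\beta a,0)$ by a short group-law computation; in the minimal model these are $(0,0)$ and $(-\beta a/p^{2w},0)$, and when $v_p(a)=2w+1$ both reduce to the origin, the singular point of $\widetilde{E'}$, so neither lies in the identity component, whence the image of $P$ in $\Phi$ has order exactly $4$. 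I expect the main obstacle to be case (3): identifying the correct minimal model, cleanly separating the even/odd behaviour, and---in the two multiplicative subcases---tracking the constant inside the Legendre symbol (the $-64\sim-1$ in case (2) and the $\beta(\alpha+8\beta)p^{-2w}$ in case (3)). Everything else, namely the conductor exponents and the component counts in the multiplicative cases, is the standard bookkeeping of Tate's algorithm once the reduction type and the splitting behaviour are known.
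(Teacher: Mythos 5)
Your argument is correct, but it reaches the proposition by a noticeably different route than the paper. The paper's proof is a straight mechanical run of Tate's algorithm as in \cite[IV \S 9]{ATAEC}: in each case it tests $p \mid b_2$ to separate $I_n$ from the starred types, reads off split versus non-split from the auxiliary quadratic $T^2 + a_1T - a_2$ at the translated singular point (in Case II this produces the roots $-8\beta \pm 4\beta\sqrt{-1}$, hence the $p \bmod 4$ dichotomy, equivalent to your tangent-cone discriminant $-64\beta^2$), and in Case III(a) it runs the step-7 subprocedure to obtain both the type $I^*_{v_p(a)}$ and $c=4$. You instead use structural facts: minimality from $v_p(c_4)<4$ with $c_4=a^2(\alpha^2-48\beta^2)$ and the unit $\alpha^2-48\beta^2$; the Kodaira type in Case III(a) pinned by additive reduction together with $v_p(j)=-v_p(a)<0$ and the minimal discriminant valuation $v_p(a)+6$; $f=2$ from tameness of quadratic twists for odd $p$; and, most notably, $c=4$ by showing the rational $4$-torsion point $(0,0)$ (with $2(0,0)=(-\beta a,0)$) maps to an element of order $4$ in the component group, which is cyclic of order $4$ for $I^*_n$ with $n$ odd. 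Your route buys a conceptual explanation of $c=4$ (it is forced by the rational $4$-torsion, exactly the structure that makes this family special) and avoids the step-7 subprocedure bookkeeping; the paper's route buys uniformity and the fact that the identical algorithmic template carries over verbatim to the residue characteristic $2$ analysis of Proposition \ref{prop:tate-alg2}, where your shortcuts (odd-$p$ tangent-cone criteria, tame quadratic twists, the simple minimality test) no longer apply directly. One small point of care: as in the paper, you are implicitly reading $c$ as the Tamagawa number $\#\Phi(\FF_p)=[E(\QQ_p):E_0(\QQ_p)]$ output by Tate's algorithm (otherwise $I^*_n$ would have $n+5$ geometric components), which is consistent with the values asserted in the proposition.
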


\begin{proof}
  We follow Tate's algorithm as described in \cite[IV \S 9]{ATAEC}.

  {\bf Case I:}  Suppose $p \mid \beta$. We apply Tate's algorithm and note that $p\dnd b_2=(\alpha+8\beta)^2+4\beta(\alpha+8\beta)$. Hence we have Kodaira type $I_{4v_p(\beta)}$ and $f=1$. Since $T^2-\alpha T$ splits completely over $\ZZ/p\ZZ$, $c=4v_p(\beta)$.

  {\bf Case II:}  Suppose $p \mid (\alpha -8\beta)$. In this case the singular point on the reduced curve is $(-2^5\beta^2,2^7\beta^3)$. Following Tate's algorithm, we make a change of coordinates $(x',y')=(x-2^5\beta^2,y+2^7\beta^3)$. Recall the notation $a=(\alpha+8\beta).$ For ease of notation we will write $x'$ as $x$ and $y'$ as $y$. We now have
\begin{align*}
E' &: y^2+axy+(2^8\beta^3+2^5\beta^2a+\beta a^2)y \\
&= x^3+(-3\cdot 2^5\beta^2+\beta a)x^2+(-2^6\beta^3a-2^7\beta^3a+3\cdot2^{10}\beta^4)x \\
& \quad +(-2^7\beta^4a^2+5\cdot2^{10}\beta^5a-3\cdot 2^{14}\beta^6).
\end{align*}
Continuing, we compute $b_2=a_1^2+4a_2$. Note $a\equiv 2^4\beta$ mod $p$. We have
\[
b_2=a^2+2^2(3x_1+\beta a)\equiv 2^8\beta^2-3\cdot2^7\beta^2+2^6\beta^2=-2^6\beta^2.
\]
This shows that $p\dnd b_2$ so that we have Kodaira type $I_{v_p({\alpha-8\beta})}$ and $f=1$. Continuing, we consider $T^2+aT+(3\cdot 2^5\beta^2 -\beta a)$ over $\ZZ/p\ZZ$. Reducing we have $T^2+2^4\beta T+5\cdot 2^4\beta^2$. Applying the quadratic formula, the roots are $-8u\beta \pm 4\beta \sqrt{-1}$. Thus the splitting field is $\ZZ/p\ZZ$ if and only if $p\equiv 1 \mod 4$. Hence $c=v_p(\alpha-8\beta)$ if $p\equiv 1 \mod 4$. Further, if $p\equiv 3 \mod 4$, then $c=1$ if $v_p(\alpha-8\beta)$ is odd and $c=2$ if $v_p(\alpha-8\beta)$ is even.

{\bf Case III:} Now assume $p \mid (\alpha+8\beta)$. Recall $w=\lfloor\frac{v_p(\alpha+8\beta)}{2}\rfloor$. We make the change of coordinates $(x,y)=(p^{2w}x',p^{3w}y')$. We have $a_1\mapsto a_1p^{-w}$, $a_2\mapsto a_2p^{-2w}$, and $a_3\mapsto a_3p^{-3w}$. Note $\Delta'=\Delta p^{-12w}$ so that $v_p(\Delta')=7v_p(\alpha-8\beta)-12w=v_p(\alpha-8\beta)$.

{\bf Part a:}
Suppose $v_p(\alpha+8\beta)$ is odd. Applying Tate's algorithm, we see $p \mid b_2'=(a_1p^{-w})^2+4a_2p^{-2w}$, $p^3 \mid b_8'=a_2a_3^2p^{-8w}$, and $p^3 \mid b_6'=a_3^2p^{-6w}$. Hence we consider $T^3-a_2p^{-v}T^2$ over $\ZZ/p\ZZ$. This polynomial has a double root at $T=0$ and a simple root at $T=a_2p^{-2w}$. Thus we have Kodaira type $I^*_{v_p(\alpha+8\beta)}$ and $f=2$. Following the subprocedure to step 7, we find $c=4$. 

{\bf Part b:} Suppose $v_p(\alpha+8\beta)$ is even. Applying Tate's algorithm, we see that $p\dnd b_2'=(a_1p^{-w})^2-4a_2p^{-2w}$. Hence we have Kodaira type $I_{v_p(\alpha+8\beta)}$ and $f=1$. Considering $T^2-\beta(\alpha+8\beta)p^{-2w}$ over $\ZZ/p\ZZ$, we see that if $\left(\frac{\beta(\alpha+8\beta)p^{-2w}}{p}\right)=1$, then $c=v_p(\alpha+8\beta)$. Conversely, if $\left(\frac{\beta(\alpha+8\beta)p^{-2w}}{p}\right)=-1$ then $c=2$.
\end{proof}

Care must be taken when $E$ has bad reduction at 2. When $2 \mid \beta$, the results and proof used above can be applied by replacing $p$ with 2. When $2 \mid (\alpha+8\beta)$ we see $2 \mid \alpha$ and hence $2 \mid \alpha-8\beta$. 

\begin{proposition}
  \label{prop:tate-alg2}
Let the notation be as before and recall, $a=\alpha+8\beta$.
\begin{enumerate}
\item If $v_2(a)=1$, then $E$ has Kodaira type $I_1^*$, $f=3$, and $c=4$. In this case, $E$ is in minimal Weierstrass form with respect to 2 and the point $(0,0)$ has singular reduction. 
\item If $v_2(a)=2$, then $E$ has Kodaira type $III$. 
\item If $v_2(a)$ is odd and greater than 1, the $E$ has Kodaira type $I^*_{v_2(a)}$. 
\item If $v_2(a)=4$ and $\frac{\beta a+4a-16}{32}$ is odd, then $E$ has Kodaira type $I_0^*$. 
\item If $v_2(a)=4$ and $\frac{\beta a+4a-16}{32}$ is even, then we have two subcases. 
  \begin{enumerate}
    \item If $\frac{\beta a^2}{2^8}\equiv 1$ mod 4, then $E$ has Kodaira type $I_2^*$. 
    \item If $\frac{\beta a^2}{2^8}\equiv 3$ mod 4, then $E$ has Kodaira type $I_3^*$. 
  \end{enumerate}
\item If $v_2(a)>4$ is even, we have several subcases:
  \begin{enumerate}
    \item If $\frac{\beta a+4a-16}{32}$ is odd, then we have Kodaira type $I_{v_2(a)-4}^*$. 
    \item If $\frac{\beta a+4a-16}{32}$ is even, we have further subcases:
      \begin{enumerate}
	\item If $v_2(a)=6$, we have Kodaira type $III^*$. 
	\item If $v_2(a)=8$, then $E$ is nonsingular at 2. 
	\item If $v_2(a)\geq 10$, we have Kodaira type $I_{v_2(a)-8}$.
      \end{enumerate}
  \end{enumerate}
\end{enumerate}
\end{proposition}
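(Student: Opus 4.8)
The plan is to execute Tate's algorithm at $p = 2$, in the style of the proof of Proposition~\ref{prop:tate-alg} but attending to the well-known pathologies of that algorithm in residue characteristic~$2$, following \cite[IV \S 9]{ATAEC}. First observe that $2 \mid a = \alpha + 8\beta$ forces $2 \mid \alpha$, and then coprimality of $\alpha$ and $\beta$ forces $\beta$ odd; so $v_2(\beta) = 0$, $2 \mid (\alpha - 8\beta)$, and by Section~\ref{sec:tate-fueter-form} the singular point of $\widetilde E$ is $(0,0)$, already at the origin. For the Tate form \eqref{eqn:tate} one has $a_1 = a$, $a_2 = \beta a$, $a_3 = \beta a^2$, $a_4 = a_6 = 0$, hence $b_2 = a(a+4\beta)$, $b_4 = \beta a^3$, $b_6 = \beta^2 a^4$, $b_8 = \beta^3 a^5$, and from \eqref{eqn:invariants} $v_2(\Delta) = 7v_2(a) + v_2(\alpha - 8\beta)$. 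My first step is to record these quantities, together with $c_4 = b_2^2 - 24 b_4$ and $c_6 = -b_2^3 + 36 b_2 b_4 - 216 b_6$ and all their $2$-adic valuations, as explicit functions of $v_2(a)$ (and, where the algorithm needs it, of the next few $2$-adic digits of $a$, $\beta a$, $\beta a^2$), so that each branch condition of Tate's algorithm becomes a condition on these data.

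The odd case $v_2(a) = 2w+1$ I expect to be comparatively clean. The substitution $(x,y) = (2^{2w}x', 2^{3w}y')$ is integral, and Kraus's minimality criterion at~$2$ shows the resulting model is minimal; its step-6 cubic reduces modulo~$2$ to $T^2(T+1)$, since the quadratic coefficient is a unit and the lower ones vanish. This lands in the $I_n^*$ branch; the step-7 subprocedure then terminates with $n = v_2(a)$, giving Kodaira type $I^*_{v_2(a)}$. For the boundary value $v_2(a) = 1$ no rescaling occurs, $v_2(\Delta) = 8$, and reading off the subprocedure (or invoking Ogg's formula $f = v_2(\Delta) - m + 1$ with $m = 6$ components for $I_1^*$) gives $f = 3$ and $c = 4$.

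The substance is the even case. Case~(2), $v_2(a) = 2$, is the smallest instance of non-minimal collapse: one rescaling by $u = 2$ drops $v_2(b_8)$ to~$2$, so step~4 of Tate's algorithm halts at once with type $III$. For $v_2(a) \ge 4$ even, after rescaling and — when $v_2(a) = 4$ — completing the now non-square tangent cone by the translation $y \mapsto y - x$, one checks that the quadratic coefficient of the step-6 cubic is exactly $\tfrac{\beta a + 4a - 16}{32}$; its parity decides between the double-root branch, where the $I_n^*$ subprocedure returns $I^*_{v_2(a)-4}$ (which is $I_0^*$ when $v_2(a) = 4$), and the triple-root branch. In the triple-root branch one descends into steps~8--9, where the relevant quadratic is $Y^2 + \overline{a_{3,2}}\,Y$; it is separable precisely when $\beta a^2/2^8$ is a unit (i.e.\ $v_2(a) = 4$), and then $\beta a^2/2^8 \bmod 4$ separates $I_2^*$ from $I_3^*$; for $v_2(a) = 6$ the descent terminates at type $III^*$; and for $v_2(a) \ge 8$ Kraus's criterion forces a further rescaling, dropping $v_2(\Delta)$ by~$12$, which collapses $v_2(a) = 8$ to good reduction and $v_2(a) \ge 10$ to multiplicative reduction of type $I_{v_2(a)-8}$, after which the multiplicative-reduction analysis (as in Case~I of Proposition~\ref{prop:tate-alg}, with $p$ replaced by~$2$) completes the argument; for each $v_2(a)$ one similarly reads off $c$ and $f$ from the subprocedure and Ogg's formula.

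I expect the main obstacle to be precisely the genuine $p = 2$ subtleties: over $\FF_2$ a quadratic $Y^2 + uY + v$ cannot be completed to a square, and its separability is an Artin--Schreier condition on $u$ (``is $u$ a unit?'') rather than the Legendre-symbol condition on the discriminant used at odd primes; moreover, a Weierstrass model over $\ZZ_2$ with $v_2(c_4) \ge 4$, $v_2(c_6) \ge 6$, $v_2(\Delta) \ge 12$ need not be non-minimal, the correct test being Kraus's congruence criterion. Carrying the translations $x \mapsto x + 2^i r$, $y \mapsto y + 2^i s x + 2^j t$ through the $I_n^*$ subprocedure while bookkeeping the low-order $2$-adic digits of $a$, $\beta a$, $\beta a^2$, and determining exactly how many rescalings survive Kraus's criterion for each value of $v_2(a)$, is what turns the subprocedure's stopping conditions into the stated congruences; this is where the computation is intricate rather than merely long.
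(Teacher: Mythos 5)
Your overall strategy is the same as the paper's: run Tate's algorithm at $p=2$ on the model rescaled by $(x,y)=(2^{2w}x',2^{3w}y')$ with $w=\lfloor v_2(a)/2\rfloor$, and read the Kodaira type off the step-6 cubic and the ensuing subprocedures; your handling of $v_2(a)=1$, $v_2(a)=2$, odd $v_2(a)>1$, $v_2(a)=6$, and even $v_2(a)\ge 8$ agrees in substance with the paper's Cases I, II-A, II-B, II-E-i and II-E-ii. However, your routing of the decisive case $v_2(a)=4$ is wrong. Write $a=2^{2w}a_0$ with $a_0$ odd and recall $\beta$ is odd. After the rescaling, a translation $y\mapsto y\pm x$ is needed (for \emph{every} even $v_2(a)\ge 4$, since $a_2'=\beta a_0$ is odd), and the step-6 cubic becomes $P(T)=T^3+\frac{\beta a\pm 2^w a-2^{2w}}{2^{2w+1}}T^2\mp 2^{w-2}\beta a_0^2\,T$. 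For $w=2$ the linear coefficient is the unit $\mp\beta a_0^2$, so when $\frac{\beta a+4a-16}{32}$ is even one gets $P(T)\equiv T(T+1)^2$ modulo $2$: a \emph{double} root at $T=1$, not a triple root. The algorithm therefore stays in the step-7 $I_n^*$ subprocedure (after translating $x\mapsto x+2$), and it is that subprocedure, run through the quadratics $Y^2+a_{3,2}Y-a_{6,4}$ and $a_{2,1}X^2+a_{4,3}X+a_{6,5}$, that yields $n=2$ or $3$ according to $\beta a^2/2^8$ modulo $4$ --- exactly the computation the paper carries out. Your plan instead sends this case into the triple-root branch (steps 8--9) and asserts that separability of $Y^2+a_{3,2}Y$ ``separates $I_2^*$ from $I_3^*$''; but the triple-root branch of Tate's algorithm can only terminate in $IV^*$, $III^*$, $II^*$ or non-minimality, never in $I_n^*$ with $n\ge 1$, and a separable quadratic there would output $IV^*$. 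As written, your argument cannot reach the stated conclusion in case (5) and would produce the wrong type. (A smaller slip of the same kind: for $v_2(a)=4$ with $\frac{\beta a+4a-16}{32}$ odd, $P(T)\equiv T(T^2+T+1)$ has three distinct roots, so $I_0^*$ comes straight out of step 6 rather than from the subprocedure ``returning $I^*_{v_2(a)-4}$''.)

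The triple-root branch genuinely begins only at even $v_2(a)\ge 6$, where the linear coefficient $\mp 2^{w-2}\beta a_0^2$ becomes even; there your description ($III^*$ at $v_2(a)=6$ via $2^4\nmid a_4$, a further rescaling for even $v_2(a)\ge 8$ collapsing to good reduction at $8$ and to $I_{v_2(a)-8}$ beyond) matches the paper. Note also that the parity test you propose to carry through the bookkeeping should be that of $(\beta a\pm 2^w a-2^{2w})/2^{2w+1}$, equivalently whether $\beta a/2^{v_2(a)}\equiv 1$ modulo $4$: the expression $(\beta a+4a-16)/32$ taken verbatim from the statement is not even an integer once $v_2(a)\ge 6$, so this must be repaired before the subprocedure's stopping conditions can be converted into the stated congruences.
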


\begin{proof}
  We follow Tate's algorithm as described in \cite[IV \S 9]{ATAEC}.

  {\bf Case I: $v_2(a)=1$.} Applying Tate's algorithm, we see $2 \mid b_2$, $4 \mid a_6$, $8 \mid b_8$, and $8 \mid b_6$. Thus we consider $$P(T)=T^3+\frac{\beta a}{2}T^2=T^2\left(T+\frac{\beta a}{2}\right).$$
We see $P(T)$ has a simple root and a double root modulo 2. Hence we have Kodaira type $I_n^*$ and $f=v_2(\Delta)-4-n$. To determine $n$ and $c$ we consider the polynomial
$$Y^2+\frac{\beta a^2}{4}Y.$$
This polynomial has distinct roots in $\ZZ/2\ZZ$. Hence $n=1$ and $c=4$. Noting $v_2(\Delta)=8$, the result follows.

{\bf Case II: $v_2(a)>1$.}  We define $w=\lfloor \frac{v_2(a)}{2}\rfloor$ and we make the change of coordinates $(x,y)=(2^{2w}x',2^{3w}y')$. For ease of notation we will write $x$ and $y$ for $x'$ and $y'$. 

{\bf Case II-A: $v_2(a)=2$.}  Then $8\dnd b_8$ and we have type $III$.

{\bf Case II-B:  $v_2(a)$ odd.}
If $v_2(a)$ is odd we consider $P(T)\equiv T^2(T+1)$ mod 2. When the subprocedure to step 7 terminates, we are left with type $I^*_{v_2(a)}$.

{\bf Case II-C: $v_2(a)=4$.}  In step 6 we change coordinates to obtain
$$y^2 +\left(\frac{a}{2^w}+2\right)xy+\frac{\beta a^2}{2^{3w}}y=x^3+\left(\frac{\beta a}{2^{2w}}+\frac{a}{2^w}-1\right)x^2+\frac{\beta a^2}{2^{3w}}x.$$
We consider $$P(T)=T^3+\frac{\beta a+2^wa-2^{2w}}{2^{2w+1}}T^2+\frac{\beta a^2}{2^{3w+2}}T.$$
If $\frac{\beta a+2^wa-2^{2w}}{2^{2w+1}}$ is odd, then we have type $I_0^*$. If $\frac{\beta a+2^wa-2^{2w}}{2^{2w+1}}$ is even, we change coordinates setting $x=x'+2$ and again abuse notation by letting $x=x'$. Our curve becomes
\begin{align*}
  y^2 &+\left(\frac{a}{2^3}+2\right)xy+\left(\frac{\beta a^2}{2^{3w}}+\frac{a}{2^{w-1}}+4\right)y \\
  &=x^3+\left(\frac{\beta a+2^wa-2^{2w}+6\cdot 2^{2w}}{2^{2w}}\right)x^2+\left(\frac{\beta a^2}{2^{3w}}+\frac{\beta a +2^wa-2^{2w}}{2^{2w}}+12\right)x.
\end{align*}
Following the subprocedure to step 7, we obtain the desired result.

{\bf Case II-D: $v_2(a)>4$ even and $\frac{\beta a+4a-16}{32}$ odd.}  Then $P(T)\equiv T^2(T+1)$ mod 2. Following the subprocedure to step 7, we find we have type $I_{v_2(a)-4}$. 

{\bf Case II-E: $v_2(a)>4$ even and $\frac{\beta a+4a-16}{32}$ even.}  Then $P(T)$ has a triple root.

{\bf Case II-E-i:  $v_2(a)=6$ and $\frac{\beta a+4a-16}{32}$ even.}  Then $16\dnd a_4=\frac{\beta a^2}{2^{3w}}$ so we have type $III^*$.

{\bf Case II-E-ii: $v_2(a) > 6$ even and $\frac{\beta a+4a-16}{32}$ even.}  Then our Weierstrass equation was not minimal. We make the change of coordinates $(x,y)=(4x',8y')$ to obtain
$$y^2 +\left(\frac{a}{2^{w+1}}+1\right)xy+\frac{\beta a^2}{2^{3w+3}}y=x^3+\frac{\beta a+2^wa-2^{2w}}{2^{2w+2}}x^2+\frac{\beta a^2}{2^{3w+4}}x.$$

{\bf Case II-E-ii-a: $v_2(a)=8$ and $\frac{\beta a+4a-16}{32}$ even.} One checks that if $v_2(a)=8$, our curve is nonsingular at 2. 

{\bf Case II-E-ii-b: $v_2(a)>8$ even and $\frac{\beta a+4a-16}{32}$ even.} We have type $I_{v_2(a)-8}$.
\end{proof}

\section{Valuation of Division Polynomials}

The purpose of this section is to determine the valuation of $F_n$ evaluated at the singular point.  This is done by reference to the valuations of $\Psi_n$ at the singular point, and the change of variables of Proposition \ref{prop:psi-to-f}.  To obtain the valuations of $\Psi_n$, we demonstrate two methods.  The first is to apply the results of \cite{s16}, which give explicit valuations based on the reduction data of Proposition \ref{prop:tate-alg}.  The second is a hands-on approach using the recurrence relations for division polynomials, which is possible in simpler cases.  We consider only odd primes.

\subsection{Odd primes dividing $\alpha - 8 \beta$}

Recall that, when $p \mid (\alpha - 8\beta)$, the singular point modulo $p$ is $(-2^5\beta^2, 2^7\beta^3)$.

\begin{proposition}
  \label{prop:val-minus}
  Suppose $p \mid (\alpha - 8\beta)$.  Let $Q$ be a point of $E(\overline{\mathbb{Q}})$ which is singular modulo $p$, and satisfies $x(Q) = -2^5\beta^2$. Let $Q'$ be the image of $Q$ under the change of coordinates to Fueter form.
  Suppose that $n$ is odd.  Then,
  \[
    v_p( F_n(Q')) = v_p(\Psi_n(Q)) = 
      v_p(\alpha - 8 \beta) \frac{n^2-1}{8}.  
  \]
\end{proposition}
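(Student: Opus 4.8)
\emph{Overview of the plan.} I would first transfer the statement from the Fueter polynomial $F_n$ to the Weierstrass division polynomial $\Psi_n$, and then read off $v_p(\Psi_n(Q))$ from the multiplicative reduction of $E$ at $p$ supplied by Proposition~\ref{prop:tate-alg}.

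\emph{Step 1: reduce to the Weierstrass side.} Since $n$ is odd, Proposition~\ref{prop:psi-to-f} gives $\Psi_n = (-1)^{(n-1)/2}\bigl(a\beta/T\bigr)^{(n^2-1)/2}F_n$, so evaluating at $Q$ (resp.\ $Q'$) yields $v_p(\Psi_n(Q)) = \tfrac{n^2-1}{2}\,v_p\!\bigl(a\beta/T(Q')\bigr) + v_p(F_n(Q'))$; hence the first equality will follow once $v_p(a\beta) = 0$ and $v_p(T(Q')) = 0$. The change of coordinates~\eqref{eqn:change-of-coord} expresses $x$ in terms of $T$ alone, $T = a\beta/(x+a\beta)$, so $T(Q') = a/(a - 2^5\beta) = a/(\alpha - 24\beta)$ and therefore $T(Q') + 1 = 2(\alpha - 8\beta)/(\alpha - 24\beta)$. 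Because $p$ is odd and $p \mid (\alpha - 8\beta)$ with $\gcd(\alpha,\beta) = 1$, we get $p \nmid \beta$ (else $p\mid\alpha$), $p \nmid a = \alpha + 8\beta$ (else $p \mid 16\beta$), and $\alpha - 24\beta \equiv -16\beta \not\equiv 0 \pmod p$; so $v_p(a\beta) = 0$ and $v_p(T(Q')) = 0$, giving $v_p(F_n(Q')) = v_p(\Psi_n(Q))$. I would also record the exact value $v_p(T(Q') + 1) = v_p(\alpha - 8\beta) =: m$, which is used below.

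\emph{Step 2: compute $v_p(\Psi_n(Q))$.} By Proposition~\ref{prop:tate-alg}, $E$ has multiplicative reduction of Kodaira type $I_m$ at $p$, is minimal there, and $Q$ reduces to the node $(-2^5\beta^2,2^7\beta^3)$. The plan is to invoke~\cite{s16}, which expresses $v_p(\Psi_n(-))$ in terms of exactly this reduction data together with the position of the argument on the special fibre of the N\'eron model. The remaining input is the ``depth'' of $Q$ in the $I_m$-fibre: a point with $x = -2^5\beta^2$ has $y$-coordinate a root of $Y^2 + a\beta(\alpha - 24\beta)Y - 2^{10}\beta^5(\alpha - 24\beta) = 0$, whose discriminant equals $\beta^2(\alpha-24\beta)(\alpha-8\beta)$ times a $p$-adic unit and so has valuation $m$; consequently the two lifts $Q,-Q$ of the node sit at depth $m/2$ (equivalently $v_p(\Psi_2(Q)) = m/2$). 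Feeding ``type $I_m$, depth $m/2$'' into the formula of~\cite{s16} yields $v_p(\Psi_n(Q)) = m(n^2-1)/8$, completing the proof. As an independent check (for $p \nmid n$) I would instead argue directly from $\Psi_n(Q) = n\sideset{}{'}\prod_{P\in E[n]\setminus\{\Ocal\}}(x(Q) - x(P))$ and the Tate uniformization $E(\overline{\QQ}_p) \cong \overline{\QQ}_p^{\times}/q^{\ZZ}$ with $v_p(q) = m$: the nontrivial $n$-torsion points are $\zeta q^{k/n}$ ($\zeta^n = 1$, $0\le k\le n-1$), those with $k = 0$ reduce to smooth non-nodal points and contribute $p$-units, while those with $1 \le k \le n-1$ reduce to the node with $v_p(x(Q) - x(P)) = (m/n)\min(k,n-k)$ (here one uses $x(Q) = -2^5\beta^2$ exactly together with the fact that the depth $m/2$ of $Q$ exceeds every $(m/n)\min(k,n-k)$); summing over the $n(n-1)/2$ node-reducing pairs gives $\sum_{k=1}^{(n-1)/2} n\cdot(km/n) = m\sum_{k=1}^{(n-1)/2}k = m(n^2-1)/8$. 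For $p\mid n$ one argues the same way but must also account for the $n$-torsion points in the kernel of reduction, whose $x$-coordinates have negative valuation and precisely cancel the term $v_p(n)$.

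\emph{Main obstacle.} The real content is Step 2: locating $Q$ on the $I_m$-fibre — i.e.\ the computation that the discriminant of the above fibre quadratic has valuation exactly $m$, equivalently $v_p(\Psi_2(Q)) = m/2$ — and, in the direct route, the valuation formula $v_p(x(Q)-x(P)) = (m/n)\min(k,n-k)$ for the node-reducing torsion points, which rests on the behaviour of the Tate parametrization near the node; quoting~\cite{s16} is the cleaner option for the latter. The case $p\mid n$ is merely bookkeeping. In the situation used later ($\beta = 1$ and $\alpha\pm 8$ squarefree, so $m = 1$) the assertion becomes $v_p(F_n(Q')) = (n^2-1)/8$, and for $n = 3$ it reads $v_p(F_3(Q')) = 1$, which drops out at once from $F_3(-1) = (\alpha-8\beta)/\beta$, $F_3'(-1) = -(\alpha-8\beta)/\beta$, and $v_p(T(Q')+1) = 1$.
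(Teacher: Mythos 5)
Your proposal is correct in substance and follows the same overall skeleton as the paper — transfer from $F_n$ to $\Psi_n$ via Proposition~\ref{prop:psi-to-f} together with $\gcd(\alpha,\beta)=1$ (your explicit computation $T(Q')=a/(\alpha-24\beta)$, $v_p(T(Q'))=0$, $v_p(T(Q')+1)=v_p(\alpha-8\beta)$ is fine), and then read $v_p(\Psi_n(Q))$ off the multiplicative-reduction valuation formula of \cite{s16} once the position of $Q$ on the $I_m$-fibre is pinned down, yielding $R_n(m/2,m)=\tfrac{m}{2}R_n(1,2)=m\tfrac{n^2-1}{8}$. Where you genuinely diverge is in how you locate $Q$: the paper proves Lemma~\ref{lem:2Q} by computing $x([2]Q)\equiv -2^4\beta^2 \pmod p$, so $[2]Q$ is nonsingular and the image of $Q$ in the component group is nonzero of order dividing $2$, hence the middle component; you instead compute the discriminant of the fibre quadratic in $y$ over $x=-2^5\beta^2$ (its valuation is exactly $m$, since the cofactor $\alpha^2-320\beta^2\equiv -2^8\beta^2$ is a $p$-unit), i.e.\ $v_p(\Psi_2(Q))=m/2$. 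The two computations are of comparable difficulty, and your Tate-parametrization cross-check, summing $(m/n)\min(k,n-k)$ over node-reducing pairs to get $m(n^2-1)/8$, is a correct independent derivation for $p\nmid n$ that the paper does not include. Two points need to be made explicit to close the argument: (i) the inference from $v_p(\Psi_2(Q))=m/2$ to ``$Q$ lies on the component of exact order $2$'' is not a tautology — it holds because a point of singular reduction on component $a$ has $v(\Psi_2)=\min(a,\ell-a)<\ell/2$ unless $a=\ell/2$ (this is the $n=2$ case of the very theorem you quote, read in reverse), whereas the paper's Lemma~\ref{lem:2Q} gives the component directly; (ii) when $v_p(\alpha-8\beta)$ is odd the point $Q$ is defined only over a ramified quadratic extension, and for $p\equiv 3\pmod 4$ the reduction is non-split, so before invoking \cite{s16} one must pass to an unramified extension with split multiplicative reduction and an even-order component group and then rescale valuations, exactly the bookkeeping of Proposition~\ref{prop:unified} with $v_p'$ and $R_n(\ell/2,\ell)=(\ell/2)R_n(1,2)$; your half-integer ``depth $m/2$'' silently absorbs this and should be spelled out.
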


To prove Proposition \ref{prop:val-minus}, we begin with a lemma.

\begin{lemma}  \label{lem:2Q}
  Suppose $p \mid (\alpha - 8\beta)$ and let $Q$ be as above. Then, $[2]Q$ does not reduce to the singular point mod $p$.
\end{lemma}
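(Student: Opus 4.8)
The plan is to reduce the claim to a single comparison of $p$-adic valuations by means of the duplication formula, and then to carry out two explicit polynomial evaluations. The key observation at the outset is that the singular point $(-2^5\beta^2, 2^7\beta^3)$ of $\widetilde{E}$ has $x$-coordinate congruent to $-2^5\beta^2$ modulo $p$, so $[2]Q$ fails to reduce to it whenever either $[2]Q$ reduces to $\mathcal{O}$ or $x([2]Q)\not\equiv -2^5\beta^2\pmod{p}$. By the classical duplication formula, which with $\Psi_2^2 = 4x^3+b_2x^2+2b_4x+b_6$ can be rewritten as
\[
  x([2]Q)\;=\;x(Q)\;-\;\frac{\Psi_3\big(x(Q)\big)}{\Psi_2(Q)^2},
\]
and since $\Psi_3(x(Q))$ and $\Psi_2(Q)^2$ are both polynomials in $x(Q)$ alone, $x([2]Q)\in\QQ$ and everything is determined by $x(Q)=-2^5\beta^2$ and the coefficients of $E$. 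If $v_p(\Psi_3(x(Q)))<v_p(\Psi_2(Q)^2)$ then $x([2]Q)$ is non-integral and $[2]Q$ reduces to $\mathcal{O}$; if $v_p(\Psi_3(x(Q)))=v_p(\Psi_2(Q)^2)$ then $\Psi_3(x(Q))/\Psi_2(Q)^2$ is a $p$-adic unit and $x([2]Q)\not\equiv x(Q)\pmod{p}$. So it is enough to show
\[
  v_p\big(\Psi_3(x(Q))\big)\;\le\;v_p\big(\Psi_2(Q)^2\big).
\]

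To prove this inequality I would evaluate both sides. For Tate's form one has $b_2=a^2+4\beta a$, $b_4=\beta a^3$, $b_6=\beta^2 a^4$, $b_8=\beta^3 a^5$ with $a=\alpha+8\beta$, and the hypothesis $p\mid(\alpha-8\beta)$ is equivalent to $a\equiv 16\beta\pmod{p}$. Substituting $x=-2^5\beta^2$ into $4x^3+b_2x^2+2b_4x+b_6$ and into $\Psi_3=3x^4+b_2x^3+3b_4x^2+3b_6x+b_8$ yields two polynomials in $a$ with coefficients in $\ZZ[\beta]$; the computation to be done is to verify that each has $a=16\beta$ as a \emph{simple} root and that the corresponding cofactor, evaluated at $a=16\beta$, is a $p$-adic unit (it will be a sign times a power of $2$ times a power of $\beta$). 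This uses that $p$ is odd and that $\gcd(\alpha,\beta)=1$ forces $p\nmid\beta$ once $p\mid(\alpha-8\beta)$. Writing $a-16\beta=\alpha-8\beta$, this gives
\[
  v_p\big(\Psi_2(Q)^2\big)\;=\;v_p(\alpha-8\beta)\;=\;v_p\big(\Psi_3(x(Q))\big),
\]
which is the desired inequality (in fact an equality), so $[2]Q$ reduces neither to $\mathcal{O}$ nor to the singular point.

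The only substantive step is the bookkeeping in the second paragraph: checking that $a=16\beta$ is a simple zero of each of the two polynomials in $a$ and pinning down the unit cofactors. Two small points to keep track of: one should record that $p\nmid 2\beta$, which is exactly what makes $x(Q)=-2^5\beta^2$ and the relevant powers of $2$ and $\beta$ invertible at $p$; and it is worth noting that the argument never invokes the field of definition of $Q$, since only $x(Q)$ enters and $x([2]Q)\in\QQ$. No structure theory of the N\'eron model is needed---only the duplication formula---which matches the hands-on flavour of this section.
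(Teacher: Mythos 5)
Your proposal is correct and is essentially the paper's own argument: the paper likewise evaluates the duplication formula (with numerator $x\Psi_2^2-\Psi_3=x^4-b_4x^2-2b_6x-b_8$ and denominator $\Psi_2^2=4x^3+b_2x^2+2b_4x+b_6$) at $x(Q)=-2^5\beta^2$, divides numerator and denominator by $a-16\beta=\alpha-8\beta$, and checks that what remains is a $p$-adic unit, concluding $x([2]Q)\equiv-2^4\beta^2\not\equiv-2^5\beta^2\pmod{p}$. Your valuation bookkeeping is the same computation rearranged (and the claimed simple zeros at $a=16\beta$ with unit cofactors, namely $-2^{16}\beta^7$ for $\Psi_3(x(Q))$ and $2^{12}\beta^5$ for $\Psi_2(Q)^2$, do check out), the only cosmetic difference being that the paper records the value of $x([2]Q)$ modulo $p$ rather than observing that $x([2]Q)-x(Q)$ is a $p$-adic unit.
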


\begin{proof} Recall $a=\alpha+8\beta$. We compute 
  \begin{align*}
    x([2]Q)&=\frac{2^{20}\beta^8-b_42^{10}\beta^4+b_62^6\beta^2-b_8}{-2^{17}\beta^6+b_22^{10}\beta^4-b_42^6\beta^2+b_6}\\
    &=\frac{2^{20}\beta^6-2^{10}a^3\beta^3+2^6a^4\beta^2-a^5\beta}{-2^{17}\beta^4+2^{10}a^2\beta^2+2^{12}a\beta^3-2^6a^3\beta+a^4}.
  \end{align*}
We divide the numerator and denominator by $a-16\beta=\alpha-8\beta$ to obtain
\[
\frac{-a^4\beta+3\cdot2^4a^3\beta^2-2^8a^2\beta^3-2^{12}a\beta^4-2^{16}\beta^5}{a^3-3\cdot2^4a^2\beta+2^8a\beta^2+2^{13}\beta^3}.
\]
Reducing mod $p$ we obtain
\[
x([2]Q)\equiv -2^4\beta^2.
\]
Thus $[2]Q$ does not reduce to the singular point.
\end{proof}

Following \cite{s16}, we define, for any integers $a, \ell$ such that $\ell \neq 0$, the sequence
\begin{equation}
  \label{eqn:rn}
  R_n(a,\ell) = \left\lfloor \frac{ n^2\widehat{a}(\ell-\widehat{a}) }{2\ell} \right\rfloor - \left\lfloor \frac{ \widehat{na}(\ell - \widehat{na}) }{2\ell} \right\rfloor ,
\end{equation}
where $\widehat{x}$ denotes the least non-negative residue of $x$ modulo $\ell$.  Theorem 9.3 of \cite{s16} gives the valuations of the sequence of division polynomials, evaluated at a point of multiplicative reduction, in terms of such sequences.  We apply this to our specific situation here.

In particular, we will encounter the sequence $R_n(1,2)$, which begins from $n=1$ as follows:
\[
0, 1, 2, 4, 6, 9, 12, 16, 20, 25, 30, 36, 42, \ldots
\]
The odd terms of the sequence have a simple closed form.
\begin{lemma}
  \label{lem:triang}
  For $n$ odd, $R_n(1,2) = \frac{n^2-1}{4}$.
\end{lemma}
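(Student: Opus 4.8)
The plan is to compute $R_n(1,2)$ directly from the definition \eqref{eqn:rn} in the case $n$ odd. Set $a=1$, $\ell=2$. Then $\widehat{a}=\widehat{1}=1$, so $\widehat{a}(\ell-\widehat{a}) = 1\cdot(2-1)=1$, and the first floor term is $\lfloor n^2/4\rfloor$. For the second term I need $\widehat{na}=\widehat{n}$, the least non-negative residue of $n$ modulo $2$; since $n$ is odd, $\widehat{n}=1$, so $\widehat{na}(\ell-\widehat{na}) = 1\cdot(2-1)=1$ and the second floor term is $\lfloor 1/4\rfloor = 0$. Hence $R_n(1,2) = \lfloor n^2/4\rfloor$.

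It remains to observe that for $n$ odd, $\lfloor n^2/4\rfloor = \frac{n^2-1}{4}$: writing $n=2k+1$ gives $n^2 = 4k^2+4k+1$, so $n^2/4 = k^2+k + \tfrac14$ and $\lfloor n^2/4\rfloor = k^2+k = \frac{(2k+1)^2-1}{4} = \frac{n^2-1}{4}$. This completes the proof.

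There is no real obstacle here; the only point requiring a moment's care is correctly reading off that $\widehat{na}$ depends only on the parity of $n$ (since $a=1$ and $\ell=2$), which makes the second floor term vanish identically for odd $n$. I would present the computation in two short displayed lines and then the parity substitution.
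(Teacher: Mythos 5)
Your computation is correct and follows exactly the same route as the paper's proof: substitute $\widehat{a}=\widehat{na}=1$ for odd $n$ into \eqref{eqn:rn}, reduce to $\lfloor n^2/4\rfloor$, and identify this with $\frac{n^2-1}{4}$. The only difference is that you spell out the final parity step $\lfloor n^2/4\rfloor=\frac{n^2-1}{4}$ via $n=2k+1$, which the paper leaves implicit.
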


\begin{proof}
  For $n$ odd, we have $\widehat{a}=\widehat{na}=1$ in \eqref{eqn:rn}.  Therefore,
  \[
  R_n(1,2) = \left\lfloor \frac{ n^2 }{4} \right\rfloor - \left\lfloor \frac{ 1 }{4} \right\rfloor
  = \left\lfloor \frac{ n^2 }{4} \right\rfloor = \frac{n^2-1}{4}.
\]
\end{proof}

\begin{proposition}
  \label{prop:unified}
  Suppose $p \mid (\alpha - 8\beta)$ and let $Q$ be as above. Let $K$ be the potentially quadratic extension of $\QQ$ so that $Q\in E(K)$ and let $L$ be an unramified, potentially quadratic extension of $K$ such that $E$ has split multiplicative reduction over $L$ (which exists by Proposition \ref{prop:tate-alg}).  Let $v_p'$ be a lift of $v_p$ to $L$.  
Let $n>0$ and suppose $4 \dnd n$.  Then $v_p' = 2v_p$ if and only if $v_p(\alpha - 8\beta)$ is odd; otherwise $v_p' = v_p$.
We have
$$v_p'(\Psi_n(Q))=\frac{v_p'(\alpha-8\beta)}{2}R_n(1,2).$$
  If furthermore $n$ is odd, then
  $$v_p(\Psi_n(Q)) = v_p(\alpha - 8\beta)\frac{n^2-1}{8}.$$
\end{proposition}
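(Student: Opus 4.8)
The plan is to pass to a $p$-adic field over which $E$ acquires split multiplicative reduction, read off the valuation of the Tate parameter of $Q$ from the reduction data together with Lemma~\ref{lem:2Q}, feed this into Theorem~9.3 of \cite{s16}, and finally descend back to $\QQ$ using that $\Psi_n$ is a polynomial in $x$ alone when $n$ is odd.

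\emph{Set-up of the fields.} Since $x(Q)=-2^5\beta^2\in\QQ$, the field $K=\QQ(Q)$ has degree at most $2$, and for $n$ odd $\Psi_n(Q)=\Psi_n(x(Q))\in\QQ$. By Proposition~\ref{prop:tate-alg}, $E$ has reduction type $I_m$ at $p$ with $m=v_p(\alpha-8\beta)$, split exactly when $p\equiv1\pmod 4$; the tangent directions at the node lie in the unramified quadratic extension $\QQ_p(\sqrt{-1})$, so over an extension $L\supseteq K$ with $L/K$ unramified at $p$ the curve $E/L$ is split multiplicative. Write $E\cong\overline{\QQ}_p^{\,*}/q^{\ZZ}$. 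Since $p\nmid\beta(\alpha+8\beta)(\alpha^2-48\beta^2)$ one has $v_p(j)=-m$, hence for the normalized valuation $v_p'$ on $L$ we get $v_p'(q)=v_p'(\alpha-8\beta)=:\ell$, and $v_p'$ restricts on $\QQ$ to $e\,v_p$ with $e\in\{1,2\}$ the ramification index. The point $Q$ reduces to the singular point, so its Tate parameter $u_Q$ has $\ell\nmid v_p'(u_Q)$, while $[2]Q$ does not reduce there by Lemma~\ref{lem:2Q}, so $\ell\mid 2v_p'(u_Q)$; hence $v_p'(u_Q)\equiv\ell/2\pmod\ell$, and in particular $\ell$ is even. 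A point with this property can be defined over an extension of $\QQ_p(\sqrt{-1})$ of ramification index $e'$ only when $e'\cdot m$ is even, so $K$ (and $L$) is ramified over $\QQ$ at $p$ precisely when $m$ is odd; this is the asserted statement $v_p'=2v_p\iff v_p(\alpha-8\beta)$ is odd.

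\emph{The valuation via \cite{s16}.} Applying Theorem~9.3 of \cite{s16} to $Q$ on the split multiplicative curve $E/L$, whose Tate parameter $u_Q$ has $v_p'(u_Q)\equiv\ell/2\pmod\ell$ and whose $q$-parameter has valuation $\ell$, expresses $v_p'(\Psi_n(Q))$ in terms of the sequence $R_n(\ell/2,\ell)$. Unwinding the floor functions in \eqref{eqn:rn} — using $4\nmid n$, that $\ell$ is even (so $\widehat{na}\in\{0,\ell/2\}$), and that $8\mid n^2-1$ for $n$ odd whereas $8\mid n^2\ell$ for $n\equiv2\pmod4$ — yields $R_n(\ell/2,\ell)=\tfrac\ell2 R_n(1,2)$, hence
\[
  v_p'(\Psi_n(Q))=\frac{v_p'(\alpha-8\beta)}{2}\,R_n(1,2).
\]

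\emph{Descent and conclusion.} For $n$ odd, $\Psi_n(Q)$ and $\alpha-8\beta$ lie in $\QQ$, so $v_p'(\Psi_n(Q))=e\,v_p(\Psi_n(Q))$ and $v_p'(\alpha-8\beta)=e\,v_p(\alpha-8\beta)$; the factor $e$ cancels, and substituting $R_n(1,2)=\tfrac{n^2-1}{4}$ from Lemma~\ref{lem:triang} gives $v_p(\Psi_n(Q))=v_p(\alpha-8\beta)\tfrac{n^2-1}{8}$.

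\emph{Main obstacle.} The delicate part is the field bookkeeping of the second paragraph: matching the ramification forced on $v_p'(u_Q)$ by Lemma~\ref{lem:2Q} against the parity of $m$ and the component-count data of Proposition~\ref{prop:tate-alg}, and correctly normalizing the valuation used in Theorem~9.3 of \cite{s16}. Once that is settled, identifying $\widehat a=\ell/2$, proving $R_n(\ell/2,\ell)=\tfrac\ell2 R_n(1,2)$, and the descent are all routine.
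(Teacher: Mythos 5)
Your overall route is the paper's: identify the component of $Q$ in the component group $\ZZ/v_p'(\alpha-8\beta)\ZZ$ over $L$ as the element of order $2$ via Lemma \ref{lem:2Q}, feed $\ell_Q=v_p'(\alpha-8\beta)$, $a_Q=\ell_Q/2$ into \cite[Theorem 9.3]{s16}, reduce $R_n(\ell/2,\ell)$ to $\tfrac{\ell}{2}R_n(1,2)$ (the paper cites \cite[Proposition 8.2(iv)]{s16}; your direct floor computation is correct and a fine substitute), and descend to $\QQ$ for odd $n$ since $\Psi_n(Q)$ and $\alpha-8\beta$ are rational. All of that is sound.

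The genuine gap is in the ramification claim. Your argument is that a point whose component has order $2$ forces $\ell=e\,m$ to be even (with $m=v_p(\alpha-8\beta)$, $e$ the ramification index), hence $e=2$ when $m$ is odd. That gives only one implication: $m$ odd $\Rightarrow$ $v_p'=2v_p$. When $m$ is even the parity constraint is vacuous, so nothing in your proof rules out that $K=\QQ(Q)$ is ramified at $p$ in that case; the assertion ``ramified precisely when $m$ is odd,'' and hence the clause ``otherwise $v_p'=v_p$'' in the proposition, is left unproved. The paper closes this by computing $K$ explicitly: $y(Q)$ generates $\QQ\bigl(\sqrt{(\alpha-8\beta)\,C}\bigr)$ where $C\equiv 2^{12}\beta^3 \pmod{\alpha-8\beta}$, so $p\nmid C$ and the radicand has $p$-adic valuation exactly $m$; thus $K$ is ramified at the odd prime $p$ if and only if $m$ is odd. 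Some such computation (or equivalent input beyond the component-group parity) is needed. Note that your two displayed valuation formulas are unaffected, since the ramification index cancels in the descent step; only the ``if and only if'' statement about $v_p'$, which is part of the proposition, is missing.
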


\begin{proof}
  One can compute that $K$ is the quadratic extension obtained by adjoining 
  \begin{align*}
    &\sqrt{\alpha^4-2^5\alpha^3\beta-2^7\alpha^2\beta^2+5\cdot 2^{11}\alpha\beta^3-15\cdot 2^{12}\beta^4} \\
    &= \sqrt{\alpha -8 \beta}\sqrt{ \alpha^3 - 24\alpha^2\beta - 320 \alpha \beta^2 +7680 \beta}.
  \end{align*}
  We also have
  \[
    \alpha^3 - 24\alpha^2\beta - 320 \alpha \beta^2 + 7680 \beta \equiv 2^{12}\beta^3 \pmod{\alpha - 8\beta}.
  \]
  Therefore, since $p$ is odd, divides $(\alpha - 8\beta)$, and is coprime to $\beta$, we have that the extension $K$ is ramified at $p$ if and only if $v_p(\alpha - 8\beta)$ is odd.  Hence, $v_p' = 2 v_p$ if and only if $v_p(\alpha - 8\beta)$ is odd; otherwise $v_p' = v_p$.  

  The group of components over $L$ is isomorphic to $\ZZ/v_p'(\alpha-8\beta)\ZZ$ since we have split multiplicative reduction. The component containing $Q$ has additive order exactly 2 by Lemma \ref{lem:2Q}. Thus it may be identified with $v_p'(\alpha-8\beta)/2$.  
  Hence, in the language of \cite{s16}, $\ell_Q=v_p'(\alpha-8\beta)$ and $a_Q=v_p'(\alpha-8\beta)/2$. Applying \cite[Theorem 9.3]{s16}, we find that
$$v_p'(\Psi_n(Q))=R_n(v_p'(\alpha-8\beta)/2,v_p'(\alpha-8\beta)).$$
  By \cite[Proposition 8.2(iv)]{s16},
  $$v_p'(\Psi_n(Q))=\frac{v_p'(\alpha-8\beta)}{2}R_n(1,2). $$

For odd $n$, $\Psi_n(x)$ is a polynomial in $x$ alone and therefore $\Psi_n(Q) \in \QQ$.  Accordingly, by Lemma \ref{lem:triang}, we obtain the given statement.
\end{proof}

Proposition \ref{prop:val-minus} follows from Propositions \ref{prop:unified} and \ref{prop:psi-to-f} (recall that $\alpha, \beta$ are coprime integers).

\subsection{Odd primes dividing $\alpha + 8 \beta$ or $\beta$}

In this case, we apply the recurrence relation for the division polynomial to obtain valuations.

\begin{proposition}
  \label{prop:val-others}
  Suppose $p \mid \beta$ or $p \mid (\alpha + 8 \beta)$ (these cases are mutually exclusive).  Then $(0,0)$ is a point of order $4$ and has singular reduction on $\widetilde{E}$; the corresponding point in Fueter form has $T=1$.  Suppose that $n$ is odd.

  If $p \mid \beta$, then
   \[
    v_p(\Psi_n(0)) = \frac{3n^2 -3}{8} v_p(\beta), \quad
    v_p(F_n(1)) = -\frac{n^2 -1}{8} v_p(\beta).
  \]

  If $p \mid (\alpha + 8\beta)$, then
 \[
    v_p(\Psi_n(0)) = \frac{5n^2 - 5}{8} v_p(\alpha + 8 \beta), \quad
    v_p(F_n(1)) = \frac{n^2 -1}{8} v_p(\alpha + 8\beta).
  \]
\end{proposition}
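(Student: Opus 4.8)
The plan is to track the sequence $W_n := \Psi_n(Q)$, where $Q=(0,0)$, and to reduce the whole computation to a single recursion that trivialises the induction. First I would recall from Section~\ref{sec:tate-fueter-form} and Proposition~\ref{prop:tate-alg} the three opening assertions of the statement: $Q$ has order $4$, it has singular reduction at $p$ (whether $p\mid\beta$ or $p\mid\alpha+8\beta$), and under \eqref{eqn:change-of-coord} it is the point with $T=1$. Since $\gcd(\alpha,\beta)=1$, the prime $p$ divides exactly one of $\beta$ and $a=\alpha+8\beta$, so the two cases are disjoint, and a direct computation of the $b$-invariants of \eqref{eqn:tate} gives $b_8=\beta^3(\alpha+8\beta)^5$, hence $v_p(b_8)$ equals $3v_p(\beta)$ in the first case and $5v_p(\alpha+8\beta)$ in the second. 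Because $[4]Q=\mathcal{O}$ we have $W_4=0$; plugging $x=0$ into the displayed formula for $\Psi_3$ gives $W_3=b_8$; and specialising \eqref{eqn:psi-rec} at $Q$ (with the $\Psi_4$-terms killed) gives $W_5 = W_4W_2^3 - W_1W_3^3 = -W_3^3$ and $W_7 = W_5W_3^3 - W_2W_4^3 = -W_3^6$. So the small odd values are all signed powers of $b_8$.

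Next I would invoke the elliptic divisibility sequence structure: the sequence $(W_n)_n$ satisfies the Ward recursion $W_{m+k}W_{m-k} = W_{m+1}W_{m-1}W_k^2 - W_{k+1}W_{k-1}W_m^2$ (with the conventions $W_1=1$, $W_0=0$, $W_{-n}=-W_n$; this is equivalent to the recurrences \eqref{eqn:psi-rec}). Taking $k=4$ and substituting $W_4=0$ and $W_5=-W_3^3$, the right-hand side collapses to
\[
  W_{m+4}\,W_{m-4} = W_3^{4}\,W_m^{2}\qquad\text{for all }m .
\]
For odd $m$, both $m$ and $m\pm 4$ are prime to $4$, so none of the $W$'s here vanishes, and this becomes an \emph{exact} equality of valuations: $v_p(W_{m+4}) + v_p(W_{m-4}) = 4e + 2v_p(W_m)$, where $e:=v_p(b_8)$.

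From there I would prove by induction on odd $n$ that $v_p(W_n) = e\,\frac{n^2-1}{8}$. The base cases $n=1,3,5,7$ read $v_p(W_n)=0,\,e,\,3e,\,6e$, matching. For the inductive step, the exact valuation identity and the formulas for $m$ and $m-4$ yield
\[
  v_p(W_{m+4}) = 4e + 2e\frac{m^2-1}{8} - e\frac{(m-4)^2-1}{8} = e\,\frac{(m+4)^2-1}{8}.
\]
Substituting $e=3v_p(\beta)$ (resp.\ $e=5v_p(\alpha+8\beta)$) gives the stated values of $v_p(\Psi_n(0))$. Finally, the $F_n$-statements follow from Proposition~\ref{prop:psi-to-f}: since $T(Q)=1$, that proposition gives $\Psi_n(0)=\pm\,(a\beta)^{(n^2-1)/2}F_n(1)$, so $v_p(F_n(1)) = v_p(\Psi_n(0)) - \tfrac{n^2-1}{2}v_p(a\beta)$; as $v_p(a\beta)$ equals $v_p(\beta)$ in the first case and $v_p(\alpha+8\beta)$ in the second, this is $-\tfrac{n^2-1}{8}v_p(\beta)$, resp.\ $\tfrac{n^2-1}{8}v_p(\alpha+8\beta)$.

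The step I expect to demand the most care is isolating the identity $W_{m+4}W_{m-4}=W_3^4W_m^2$. The key observation is that $W_5=-W_3^3$, which is exactly what makes Ward's recursion with $k=4$ degenerate into a relation involving only $W_3$ and $W_m$; this keeps the induction inside the odd-indexed subsequence and avoids ever expanding $W_5,W_6,W_7$ as explicit polynomials in $\alpha,\beta$. Secondary points to check are the sign conventions relating Ward's recursion to \eqref{eqn:psi-rec} and the $b$-invariant computation $b_8=\beta^3(\alpha+8\beta)^5$; everything else is bookkeeping.
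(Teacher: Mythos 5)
Your proof is correct, but it takes a genuinely different route from the paper's. The paper works entirely with the two duplication formulas \eqref{eqn:psi-rec}: after computing the base cases $\Psi_1(0),\dots,\Psi_4(0)$ directly, it first shows by induction that $\Psi_n(0)=0$ whenever $4\mid n$, and then runs a second induction on odd $N=2m+1$, splitting into the cases $m$ even/odd so that exactly one of the two terms in $\Psi_{m+2}\Psi_m^3-\Psi_{m-1}\Psi_{m+1}^3$ survives, adding valuations term by term; the $F_n$ statement is then deduced from Proposition \ref{prop:psi-to-f}, exactly as you do. You instead evaluate the full Ward/EDS identity $\Psi_{m+k}\Psi_{m-k}=\Psi_{m+1}\Psi_{m-1}\Psi_k^2-\Psi_{k+1}\Psi_{k-1}\Psi_m^2$ at $Q=(0,0)$ with $k=4$; the facts $W_4=0$, $W_5=-W_3^3$, $W_3=b_8=a_2a_3^2=\beta^3(\alpha+8\beta)^5$ collapse it to $W_{m+4}W_{m-4}=W_3^4W_m^2$, an exact recursion in valuations on the odd-index subsequence (nonvanishing is immediate since $Q$ has exact order $4$ and all indices involved are odd), which yields $v_p(W_n)=v_p(b_8)\tfrac{n^2-1}{8}$ in a one-line induction in steps of $4$. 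Your route is arguably cleaner and makes the quadratic growth transparent, and it never needs the even-index values or the separate ``$4\mid n$'' lemma; the paper's route has the advantage of using only the recurrences it has already recorded. One caution: the general identity you use is not literally ``equivalent to'' \eqref{eqn:psi-rec} --- the displayed recurrences are its specializations, and recovering the general relation from them needs Ward's theorem on elliptic divisibility sequences --- so you should cite it as the standard division-polynomial identity (valid as functions on $E$, hence at the rational point $Q$) rather than claim it follows from \eqref{eqn:psi-rec}. The remaining checkpoints of your argument ($b_8=\beta^3 a^5$, $\Psi_3(0)=b_8$, the induction arithmetic, and $v_p(F_n(1))=v_p(\Psi_n(0))-\tfrac{n^2-1}{2}\,v_p(a\beta)$ from Proposition \ref{prop:psi-to-f} with $T(Q)=1$, split according to whether $p\mid\beta$ or $p\mid a$) are all correct and reproduce the stated formulas.
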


\begin{proof}
We will proceed by induction. Recall $a=\alpha+8\beta$. For the base cases we have $\Psi_1(0)=1$, $\Psi_2(x,y)=2y+ax+a^2$ so $\Psi_2(0)=a^2$. Further, $\Psi_3=3x^4+b_2x^3+3b_4x^2+3b_6x+b_8=3x^4+(a^2+4\beta a)x^3+3\beta a^3x^2+3\beta^2a^4x+\beta^3a^5$. Hence $\Psi_3(0)=\beta^3a^5$. We have $\Psi_4=\Psi_2(2x^6+b_2x^5+5b_4x^4+10b_6x^3+10b_8x^2+(b_2b_8-b_4b_6)x+(b_4b_8-b_6^2))$. Evaluating at 0 we obtain $\Psi_4(0)=\Psi_2(0)(b_4b_8-b_6^2)=\Psi_2(0)(\beta^4a^8-\beta^4a^8)=0$.

First we prove if $4 \mid n$, $\Psi_n(0)=0$. Suppose we have the result for all $n<N$ and suppose $4 \mid N$.  Let $N=2m$, so that $m$ is even.  Then
$$\Psi_2\Psi_N=\Psi_2\Psi_{2m}=\Psi_{m-1}^2\Psi_m\Psi_{m+2}-\Psi_{m-2}\Psi_m\Psi_{m+1}^2.$$
Now either $4 \mid m$ or $4 \mid m-2$ and $4 \mid m+2$. Hence the result follows by induction.

Now suppose that $v_p(\Psi_n(0))=v_p(a)\frac{5n^2-5}{8}+v_p(\beta)\frac{3n^2-3}{8}$ for all $n<N$. Suppose $N$ is odd, and write $N=2m+1$. We have
$$\Psi_N=\Psi_{2m+1}=\Psi_{m+2}\Psi_m^3-\Psi_{m-1}\Psi_{m+1}^3.$$
Suppose first that $m$ is even.  Then either $m$ or $m+2$ is divisible by 4. Hence
\begin{align*}
v_p(\Psi_N(0))&=v_p(\Psi_{m-1}(0))+3v_p(\Psi_{m+1}(0))\\
&=v_p(a)\frac{5(m-1)^2-5}{8}+v_p(\beta)\frac{3(m-1)^2-3}{8}\\
&\quad\quad\quad\quad+v_p(a)3\frac{5(m+1)^2-5}{8}+v_p(\beta)3\frac{3(m+1)^2-3}{8}\\
&=v_p(a)\frac{5(2m+1)^2-5}{8}+v_p(\beta)\frac{3(2m+1)^2-3}{8}.
\end{align*}

Likewise, if $m$ is odd, either $m-1$ or $m+1$ is divisible by 4. Hence
\begin{align*}
v_p(\Psi_N(0))&=v_p(\Psi_{m+2}(0))+3v_p(\Psi_{m}(0))\\
&=v_p(a)\frac{5(m+2)^2-5}{8}+v_p(\beta)\frac{3(m+2)^2-3}{8}\\
&\quad\quad\quad\quad+v_p(a)3\frac{5m^2-5}{8}+v_p(\beta)3\frac{3m^2-3}{8}\\
&=v_p(a)\frac{5(2m+1)^2-5}{8}+v_p(\beta)\frac{3(2m+1)^2-3}{8}.
\end{align*}

This gives the stated results for $\Psi_n$.  For $F_n$, we use the change of coordinates between Weierstrass and Fueter form and Proposition \ref{prop:psi-to-f}.
\end{proof}

\section{Proof of the Main Theorem}

\begin{proof}[Proof of Theorem \ref{thm:ec-family}]
  Suppose $E$ is an elliptic curve defined over $\QQ$, and suppose a twist $E'$ has a rational $4$-torsion point, hence can be put into Tate normal form as in \eqref{eqn:tate} with $\alpha, \beta \in \ZZ$ coprime.  The $j$-invariant of the elliptic curve is invariant under twisting.  In Tate normal form, the discriminant and $j$-invariant are of the form
  \[
    \Delta = \beta^4(\alpha - 8\beta)(\alpha + 8\beta)^7, \quad j = \frac{ (\alpha^2 - 48 \beta^2)^3 }{ \beta^4 (\alpha - 8\beta)(\alpha + 8\beta) }, \quad \alpha,\beta \in \ZZ.
  \]
  Therefore $E'$ has good reduction modulo $p$ unless $p \mid \beta(\alpha - 8\beta)(\alpha + 8\beta)$.
 
  We now show that conditions \eqref{item:red}, \eqref{item:sqf} and \eqref{item:j} of the statement are equivalent.  Under condition \eqref{item:red}, we have $\beta=1$ by Proposition \ref{prop:tate-alg}.  In this case, requirements \eqref{item:sqf} and \eqref{item:j} are evidently equivalent.  For odd primes, Proposition \ref{prop:tate-alg} implies that $p^2$ may not divide $\alpha \pm 8$.  For $p=2$, Proposition \ref{prop:tate-alg2} implies that $v_2(\alpha + 8) = 0$ or $1$.  This implies $v_2(\alpha - 8) = 0$ or $1$ also, and we have demonstrated condition \eqref{item:j}.  Hence \eqref{item:red} implies \eqref{item:sqf} and \eqref{item:j}.
  Conversely, if condition \eqref{item:sqf} holds, we apply Propositions \ref{prop:tate-alg} and \ref{prop:tate-alg2} to conclude that \eqref{item:red} holds.  Thus we have demonstrated all the conditions are equivalent.


  The field $K_\alpha$ generated by the $x$-coordinate of a single point of order $3$ is invariant under the twist.  Therefore we now assume $E$ itself has a rational $4$-torsion point.
  Change coordinates so that $E$ is in Tate normal form and Fueter form as in Section \ref{sec:tate-fueter-form} with $\alpha \in \ZZ$ and $\beta=1$.  We then find that the partial $3$-torsion field is generated by the $3$-division Fueter polynomial, $F_3(T) = T^4 - 6T^2 - \alpha T - 3$.  Let $\theta$ be a root of this polynomial, and let $K = \QQ(\theta)$.
  Under the equivalent conditions of the theorem, the polynomial $F_3(T)$ is irreducible, as observed in \cite[Proposition 2.10]{FV}, so $K$ is a quartic field.

  We apply the Montes algorithm.  It calls for examining the polynomial $F_3$ developed around any lift of a repeated irreducible factor modulo $p$; each such situation may contribute a factor to the index $[\mathcal{O}_K: \ZZ[\theta]]$.  If no such non-trivial factors appear, we can conclude $\mathcal{O}_K = \ZZ[\theta]$.

  We will show prime-by-prime that the only repeated factors are linear of the form $T-T_0$ and that $v_p(F_3(T_0)) = 1$.

  {\bf Case I: $p=2$.}  Modulo $2$, the polynomial $F_3$ becomes $T^4 - \alpha T - 1$.  If $\alpha$ is odd, this is irreducible with no repeated roots.  If $\alpha$ is even, then the repeated root is $1$, so we develop $F_3$ around $T-1$, obtaining a constant term of $-\alpha - 8$, which we have assumed to be squarefree.  Therefore in this case $v_2(F_3(1)) = 1$.

  {\bf Case II: $p=3$.}  
  Modulo $3$, the polynomial $F_3$ becomes $T^4 - \alpha T$, and $\alpha$ is a repeated root. If $3$ divides $\alpha$, then a lift of this root is $0$, and $v_3(F_3(0)) = 1$.  If $\alpha \equiv 1 \pmod 3$, then $4$ is a lift, and $v_3(F_3(4)) = 1$. Else $-4$ is a lift of $\alpha$, and $v_3(F_3(-4)) = 1$.

  {\bf Case III: $p \ge 5$.}
  Now, suppose $F_3$ has a repeated irreducible factor modulo an odd prime $p$.  The roots of $F_3$ are the four $x$-coordinates of non-trivial $3$-torsion; this means that reduction modulo $p$ fails to be injective on $E[3]$.  This occurs if and only if $E$ has bad reduction at $p$, or $p=3$.

  Suppose $p \ge 5$ is a prime of bad reduction, and suppose $Q$ is a point on $E$ having singular reduction modulo $p$.  Specifically, if $p \mid \alpha + 8$, take $Q = (0,0)$.  If $p \mid \alpha - 8$, take $x(Q) = -2^5$.  Then, the only repeated root of $F_3$ modulo $p$ is $T(Q)$ (since the failure of injectivity under reduction must take the form of $3$-torsion points mapping to the singular point, as the map to the non-singular part has torsion-free kernel).  Then, using the fact that $\alpha \pm 8$ are not divisible by $p^2$, we learn from Propositions \ref{prop:val-minus} and \ref{prop:val-others} that $v_p(F_3(T(Q))) = 1$.

  In each case, we find that $v_p(F_3(T_0))=1$ where $T_0$ is the repeated root.  Therefore the associated Newton polygon starts at height $1$ on the $y$-axis.  Hence, the polygon cannot pass through any lattice points and cannot contain any lattice points, and the polygon has only one segment, as in Proposition \ref{prop:montes-for-us}.  Therefore it is $p$-regular.  By the Montes algorithm, this implies that the index $[\mathcal{O}_K: \ZZ[\theta]]$ is not divisible by $p$.

  As we have verified that the index $[\mathcal{O}_K : \ZZ[\theta]]$ is not divisible by any prime, we conclude that $\mathcal{O}_K = \ZZ[\theta]$.
\end{proof}

Theorem \ref{thm:field-family} follows immediately.

\section{Algebraic number theory of the family $T^4 - 6T^2 - \alpha T - 3$}\label{sec:ant}

Let $\theta$ be a root of $T^4 - 6T^2 - \alpha T - 3$.  Consider the field $K_\alpha = \QQ(\theta)$.
This family of number fields was studied by Fleckinger and V\'erant \cite{FV}.  Let $\alpha \ge 9$, $\alpha \in \ZZ$, and $\alpha \neq 24$.  Then Fleckinger and V\'erant showed that $K_\alpha$ is an $S_4$ quartic field with two real embeddings \cite[Proposition 2.10]{FV}.  They give an explicit basis for the ring of integers in general \cite[Proposition 2.11]{FV}, but it is not a power basis and they do not mention monogenicity.  Finally, they remark that when $3 \mid \alpha$, then $1 + \frac{\alpha}{3} \theta + 2\theta^2$ is a unit.  In fact, they point out that there are no other parametrized units in this field.
Experimentally, we observed surprisingly small regulators and surprisingly large class groups for these fields; the existence of a simple parametrized unit is a possible explanation.

\section{A related family}

Fleckinger and V\'erant also study the family of quartic fields given by $T^4 + \frac{\alpha}{2} T^3 + 6 T^2 + \frac{\alpha}{2} T + 1$ of discriminant $-4\left( \left(\frac{\alpha}{2}\right)^2 - 16 \right)^3$, which they observe arise from a point of order four on a Fueter model \cite{FV}.  The authors prove that this family is monogenic whenever $(\alpha/2)^2 - 16$ is odd and squarefree, and $\alpha \ge 12$ \cite[Corollary 1.4]{FV}.  This appears to be a $D_8$ family.  We leave it as an open question whether the methods of this paper may apply to this family.

\section{Experimental Data}
\label{sec:exp}

As part of our exploration, we took a survey of elliptic curves to determine the prevalence of monogenic fields, using Sage Mathematics Software \cite{Sage} and pari/GP \cite{PARI}.  Up to isogeny, there are 11575 curves of conductor less than 10000 whose 3-division field is monogenic.  The torsion points of many curves share the same field of definition, and in all, these 11575 curves yield 1026 unique fields.  In particular, the following families of fields are prevalent.
\begin{center}
\renewcommand{\arraystretch}{1.2}
\begin{tabular}{l|l} 
\text{Polynomial} & \text{Discriminant}\\
\hline 
$T^4 - 6s T^2 - t T - 3s^2$ & $-3^3(t^2 - 64s^3)^2$ \\
$T^4 - T^3 - 3s T^2 - (4t + 3s^2)T + t$ & $-3^3(16t^2 + (24s^2 + 12s + 1)t + (9s^4 + s^3))^2$\\
$T^4 - 2T^3 - 6s T^2 - (2t+6s^2)T + t$ & $-2^4 3^3 (t^2 + (6s^2 + 6s + 1)t + (9s^4 + 2s^3))^2$
\end{tabular}
\end{center}
In the table above $T$ is the indeterminate, while $s,t \in \ZZ$ parametrize the family.  Each of these quartic field families appears to be $S_4$ monogenic under appropriate conditions on the discriminant and the parameters.

\bibliography{MonoDivBib}

\begin{thebibliography}{10}

\bibitem{MR1836119}
Clemens Adelmann.
\newblock {\em The decomposition of primes in torsion point fields}, volume
  1761 of {\em Lecture Notes in Mathematics}.
\newblock Springer-Verlag, Berlin, 2001.

\bibitem{BP2012}
Andrea Bandini and Laura Paladino.
\newblock Number fields generated by the 3-torsion points of an elliptic curve.
\newblock {\em Monatsh. Math.}, 168(2):157--181, 2012.

\bibitem{BP2016}
Andrea Bandini and Laura Paladino.
\newblock Fields generated by torsion points of elliptic curves.
\newblock {\em J. Number Theory}, 169:103--133, 2016.

\bibitem{MR3114010}
Attila B\'erczes, Jan-Hendrik Evertse, and K\'alm\'an Gy\H{o}ry.
\newblock Multiply monogenic orders.
\newblock {\em Ann. Sc. Norm. Super. Pisa Cl. Sci. (5)}, 12(2):467--497, 2013.

\bibitem{KrausCali}
\'Elie Cali and Alain Kraus.
\newblock Sur la {$p$}-diff\'erente du corps des points de {$l$}-torsion des
  courbes elliptiques, {$l\ne p$}.
\newblock {\em Acta Arith.}, 104(1):1--21, 2002.

\bibitem{ct87}
Ph. Cassou-Nogu\`es and M.~J. Taylor.
\newblock {\em Elliptic functions and rings of integers}, volume~66 of {\em
  Progress in Mathematics}.
\newblock Birkh\"auser Boston, Inc., Boston, MA, 1987.

\bibitem{ch12}
John Cullinan and Farshid Hajir.
\newblock Ramification in iterated towers for rational functions.
\newblock {\em Manuscripta Math.}, 137(3-4):273--286, 2012.

\bibitem{Sage}
The~Sage Developers.
\newblock {\em {S}ageMath, the {S}age {M}athematics {S}oftware {S}ystem
  ({V}ersion 7.3)}, 2016.
\newblock \url{http://www.sagemath.org}.

\bibitem{FV}
V.~Fleckinger and M.~V\'erant.
\newblock Families of non-{G}alois quartic fields.
\newblock {\em J. Number Theory}, 54(2):261--273, 1995.

\bibitem{Fueter}
R.~Fueter and M.~Gut.
\newblock {\em Vorlesungen {\"U}ber Die Singul{\"a}ren Moduln und Die Komplexe
  Multiplikation Der Elliptischen Funktionen. Von Dr. R. Fueter (unter
  Mitwirkung Von Dr. Max Gut).}
\newblock B.G. Teubners Sammlung von Lehrb{\"u}chern, etc. Bd. 41. 1924.

\bibitem{MR0779772}
Takeo Funakura.
\newblock On integral bases of pure quartic fields.
\newblock {\em Math. J. Okayama Univ.}, 26:27--41, 1984.

\bibitem{MR1896601}
Istv\'an Ga\'al.
\newblock {\em Diophantine equations and power integral bases}.
\newblock Birkh\"auser Boston, Inc., Boston, MA, 2002.
\newblock New computational methods.

\bibitem{g14}
T.~Alden Gassert.
\newblock Discriminants of {C}hebyshev radical extensions.
\newblock {\em J. Th\'eor. Nombres Bordeaux}, 26(3):607--634, 2014.

\bibitem{g17}
T.~Alden Gassert.
\newblock A note on the monogeneity of power maps.
\newblock {\em Albanian J. Math.}, 11(1):3--12, 2017.

\bibitem{Gras}
Marie-Nicole Gras.
\newblock Condition n\'ecessaire de monog\'en\'eit\'e de l'anneau des entiers
  d'une extension ab\'elienne de {${\bf Q}$}.
\newblock In {\em S\'eminaire de th\'eorie des nombres, {P}aris 1984--85},
  volume~63 of {\em Progr. Math.}, pages 97--107. Birkh\"auser Boston, Boston,
  MA, 1986.

\bibitem{tangras}
Marie-Nicole Gras and Fran\c{c}ois Tano\'e.
\newblock Corps biquadratiques monog\`enes.
\newblock {\em Manuscripta Math.}, 86(1):63--79, 1995.

\bibitem{gmn12}
Jordi Gu\`ardia, Jes\'us Montes, and Enric Nart.
\newblock Newton polygons of higher order in algebraic number theory.
\newblock {\em Trans. Amer. Math. Soc.}, 364(1):361--416, 2012.

\bibitem{MR0437489}
K.~Gy\"ory.
\newblock Sur les polyn\^omes \`a coefficients entiers et de discriminant
  donn\'e.
\newblock {\em Acta Arith.}, 23:419--426, 1973.

\bibitem{MR1321725}
James~G. Huard, Blair~K. Spearman, and Kenneth~S. Williams.
\newblock Integral bases for quartic fields with quadratic subfields.
\newblock {\em J. Number Theory}, 51(1):87--102, 1995.

\bibitem{MR1688180}
Anthony~C. Kable.
\newblock Power bases in dihedral quartic fields.
\newblock {\em J. Number Theory}, 76(1):120--129, 1999.

\bibitem{MR2028514}
M.~Kida.
\newblock Ramification in the division fields of an elliptic curve.
\newblock {\em Abh. Math. Sem. Univ. Hamburg}, 73:195--207, 2003.

\bibitem{Kraus}
Alain Kraus.
\newblock Sur la {$p$}-diff\'erente du corps des points de {$p$}-torsion des
  courbes elliptiques.
\newblock {\em Bull. Austral. Math. Soc.}, 60(3):407--428, 1999.

\bibitem{MR3438391}
\'Alvaro Lozano-Robledo.
\newblock Division fields of elliptic curves with minimal ramification.
\newblock {\em Rev. Mat. Iberoam.}, 31(4):1311--1332, 2015.

\bibitem{MR3501021}
\'Alvaro Lozano-Robledo.
\newblock Ramification in the division fields of elliptic curves with potential
  supersingular reduction.
\newblock {\em Res. Number Theory}, 2:Art. 8, 25, 2016.

\bibitem{MR2017249}
Yasuo Motoda.
\newblock Notes on quartic fields.
\newblock {\em Rep. Fac. Sci. Engrg. Saga Univ. Math.}, 32(1):1--19, 2002.

\bibitem{MR2605782}
Yasuo Motoda, Toru Nakahara, Syed Inayat~Ali Shah, and Tsuyoshi Uehara.
\newblock On a problem of {H}asse.
\newblock In {\em Algebraic number theory and related topics 2007}, RIMS
  K\^oky\^uroku Bessatsu, B12, pages 209--221. Res. Inst. Math. Sci. (RIMS),
  Kyoto, 2009.

\bibitem{Nark}
W\l{}adys\l{}aw Narkiewicz.
\newblock {\em Elementary and analytic theory of algebraic numbers}.
\newblock Springer Monographs in Mathematics. Springer-Verlag, Berlin, third
  edition, 2004.

\bibitem{MR2169516}
P\'eter Olajos.
\newblock Power integral bases in the family of simplest quartic fields.
\newblock {\em Experiment. Math.}, 14(2):129--132, 2005.

\bibitem{ATAEC}
Joseph~H. Silverman.
\newblock {\em Advanced topics in the arithmetic of elliptic curves}, volume
  151 of {\em Graduate Texts in Mathematics}.
\newblock Springer-Verlag, New York, 1994.

\bibitem{spear}
Blair~K. Spearman.
\newblock Monogenic {$A_4$} quartic fields.
\newblock {\em Int. Math. Forum}, 1(37-40):1969--1974, 2006.

\bibitem{s16}
Katherine~E. Stange.
\newblock Integral points on elliptic curves and explicit valuations of
  division polynomials.
\newblock {\em Canad. J. Math.}, 68(5):1120--1158, 2016.

\bibitem{PARI}
{The PARI~Group}, Univ. Bordeaux.
\newblock {\em {PARI/GP version {\tt 2.9.0}}}, 2016.
\newblock available from \url{http://pari.math.u-bordeaux.fr/}.

\bibitem{Verdure}
Hugues Verdure.
\newblock A quadratic reciprocity law for elliptic curves.
\newblock {\em Acta Sci. Math. (Szeged)}, 75(3-4):457--465, 2009.

\end{thebibliography}
\bibliographystyle{plain}

\end{document}